\title[Reducibility of $n$-ary semigroups]{Reducibility of $n$-ary semigroups: from quasitriviality towards idempotency}
\author{Miguel Couceiro}
\address{Universit\'e de Lorraine, CNRS, Inria Nancy G.E., LORIA, F-54000 Nancy, France}
\email{miguel.couceiro[at]\{inria,loria\}.fr}
\author{Jimmy Devillet}
\address{University of Luxembourg, Department of Mathematics, Maison du Nombre, 6, avenue de la Fonte, L-4364 Esch-sur-Alzette, Luxembourg}
\email{jimmy.devillet[at]uni.lu}
\author{Jean-Luc Marichal}
\address{University of Luxembourg, Department of Mathematics, Maison du Nombre, 6, avenue de la Fonte, L-4364 Esch-sur-Alzette, Luxembourg}
\email{jean-luc.marichal[at]uni.lu}
\author{Pierre Mathonet}
\address{University of Li\`ege, Department of Mathematics, All\'ee de la D\'ecouverte, 12 - B37, B-4000 Li\`ege, Belgium}
\email{p.mathonet[at]uliege.be}
\date{June 16, 2020}
\begin{document}

\theoremstyle{plain}
\newtheorem{theorem}{Theorem}[section]
\newtheorem{lemma}[theorem]{Lemma}
\newtheorem{proposition}[theorem]{Proposition}
\newtheorem{corollary}[theorem]{Corollary}
\newtheorem{fact}[theorem]{Fact}
\newtheorem{conjecture}[theorem]{Conjecture}
\newtheorem*{main}{Main Theorem}

\theoremstyle{definition}
\newtheorem{definition}[theorem]{Definition}
\newtheorem{example}[theorem]{Example}
\newtheorem{algorithm}{Algorithm}

\theoremstyle{remark}
\newtheorem{remark}{Remark}
\newtheorem{claim}{Claim}

\newcommand{\N}{\mathbb{N}}
\newcommand{\Z}{\mathbb{Z}}
\newcommand{\R}{\mathbb{R}}
\newcommand{\Cdot}{\boldsymbol{\cdot}}

\begin{abstract}
Let $X$ be a nonempty set. Denote by $\mathcal{F}^n_k$ the class of associative operations $F\colon X^n\to X$ satisfying the condition $F(x_1,\ldots,x_n)\in\{x_1,\ldots,x_n\}$ whenever at least $k$ of the elements $x_1,\ldots,x_n$ are equal to each other. The elements of $\mathcal{F}^n_1$ are said to be quasitrivial and those of $\mathcal{F}^n_n$ are said to be idempotent. We show that $\mathcal{F}^n_1=\cdots =\mathcal{F}^n_{n-2}\subseteq\mathcal{F}^n_{n-1}\subseteq\mathcal{F}^n_n$ and we give conditions on the set $X$ for the last inclusions to be strict. The class $\mathcal{F}^n_1$ was recently characterized by Couceiro and Devillet \cite{CouDev}, who showed that its elements are reducible to binary associative operations. However, some elements of $\mathcal{F}^n_n$ are not reducible. In this paper, we characterize the class $\mathcal{F}^n_{n-1}\setminus\mathcal{F}^n_1$ and show that its elements are reducible. We give a full description of the corresponding reductions and show how each of them is built from a quasitrivial semigroup and an Abelian group whose exponent divides $n-1$.
\end{abstract}

\keywords{Semigroup, polyadic semigroup, Abelian group, reducibility, quasitriviality, idempotency}

\subjclass[2010]{Primary 20M10, 20N15; Secondary 16B99, 20K25}

\maketitle

\section{Introduction}

Let $X$ be a nonempty set, let $|X|$ be its cardinality, and let $n\geq 2$ be an integer. An $n$-ary operation $F\colon X^n \to X$ is said to be \emph{associative} if
\begin{multline*}
F(x_1,\ldots,x_{i-1},F(x_i,\ldots,x_{i+n-1}),x_{i+n},\ldots,x_{2n-1})\\
=~ F(x_1,\ldots,x_i,F(x_{i+1},\ldots,x_{i+n}),x_{i+n+1},\ldots,x_{2n-1}),
\end{multline*}
for all $x_1,\ldots,x_{2n-1}\in X$ and all $1\leq i\leq n-1$. The pair $(X,F)$ is then called an \emph{$n$-ary semigroup}. This notion is due to D\"{o}rnte \cite{Dor28} and has led to the concept of $n$-ary group, which was first studied by Post \cite{Pos40}.

In \cite{DudMuk06} the authors investigated associative $n$-ary operations that are determined by binary associative operations. An $n$-ary operation $F\colon X^n\to X$ is said to be \emph{reducible to} an associative binary operation $G\colon X^2\to X$ if there are $G^m\colon X^{m+1}\to X$ ($m=1,\ldots,n-1$) such that $G^{n-1}=F$, $G^1=G$, and
$$
G^m(x_1,\ldots,x_{m+1}) ~=~ G^{m-1}(x_1,\ldots,x_{m-1},G(x_m,x_{m+1})), \qquad m\geq 2.
$$
The pair $(X,F)$ is then said to be the \emph{$n$-ary extension} of $(X,G)$. In that case, we also say that $F$ is the $n$-ary extension of $G$.

Also, an $n$-ary operation $F\colon X^n\to X$ is said to be
\begin{itemize}
\item \emph{idempotent} if $F(x,\ldots,x) =x$ for all $x\in X$,
\item \emph{quasitrivial} \cite{Ack,Lan80} (or \emph{conservative} \cite{PouRosSto96}) if $F(x_1,\ldots,x_n)\in\{x_1,\ldots,x_n\}$ for all $x_1,\ldots,x_n\in X$.
\end{itemize}
Clearly, any quasitrivial $n$-ary operation is idempotent. As we will illustrate below, the converse is not true, even for associative operations. 

The quest for conditions under which an associative $n$-ary operation is reducible to an associative binary operation gained an increasing interest since the pioneering work of Post \cite{Pos40} (see, e.g., \cite{Ack,CouDev,DevKiMar17,DudMuk06,KiSom18,LehSta19}). A necessary and sufficient condition for reducibility was given by Dudek and Mukhin \cite{DudMuk06} using the concept of \emph{neutral element}.
Recall that an element $e\in X$ is said to be \emph{neutral} for $F\colon X^n\to X$ if
\begin{equation}\label{neutralelem}
F((k-1)\Cdot e,x,(n-k)\Cdot e) ~=~ x,\qquad x\in X,~k\in\{1,\ldots,n\}.
\end{equation}
Here and throughout, for any $k\in\{0,\ldots,n\}$ and any $x\in X$, the notation $k\Cdot x$ stands for the $k$-tuple $x,\ldots,x$. For instance, we have
$$
F(3\Cdot x,0\Cdot y,2\Cdot z) ~=~ F(x,x,x,z,z).
$$
Throughout this paper we also denote the set of neutral elements for an operation $F\colon X^n\to X$ by $E_F$. Recall that for any binary operation $G\colon X^2 \to X$ we have $|E_{G}|\leq 1$.

Dudek and Mukhin \cite[Lemma 1]{DudMuk06} proved that if an associative operation $F\colon X^n \to X$ has a neutral element $e$, then it is reducible to the associative operation $G_{e}\colon X^2\to X$ defined by
\begin{equation}\label{eq:dud}
G_{e}(x,y) ~=~ F(x,(n-2)\Cdot e,y),\qquad x,y \in X.
\end{equation}
Furthermore, it was recently observed \cite[Corollary 2.3]{CouDev} that all the quasitrivial associative $n$-ary operations are reducible to associative binary operations. However, there are associative operations that are neither quasitrivial nor reducible to any binary operation; for instance, the associative and idempotent ternary operation $F\colon\mathbb{R}^3 \to \mathbb{R}$ defined by $F(x,y,z) = x-y+z$ (see, e.g., \cite{Sze86} or more recently \cite{MarMat11}).

The observations above show that it is natural to seek conditions under which an idempotent $n$-ary semigroup is reducible to a semigroup. To this extent, we will investigate certain subclasses of idempotent $n$-ary semigroups that contain the quasitrivial ones. In this direction, we will consider classes where the condition
$$
F(x_1,\ldots,x_n)\in \{x_1,\ldots,x_n\}
$$
holds on at least some subsets of $X^n$. More precisely, for a set $S\subseteq \{1,\ldots,n\}$, let
\[
D^n_S ~=~ \{(x_1,\ldots,x_n)\in X^n: ~\forall i,j\in S, x_i=x_j\},
\]
and, for every $k\in\{1,\ldots,n\}$, let
\[
D^n_k ~=~ \bigcup_{\textstyle{S\subseteq\{1,\ldots,n\}\atop |S|\geq k}}D^n_S ~=~ \bigcup_{\textstyle{S\subseteq\{1,\ldots,n\}\atop |S| = k}}D^n_S.
\]
Thus, the set $D^n_k$ consists of those tuples of $X^n$ for which at least $k$ components are equal to each other. In particular, $D_1^n=X^n$ and $D_n^n=\{(x,\ldots,x):x\in X\}$.

For every $k\in\{1,\ldots,n\}$, denote by $\mathcal{F}^n_k$ the class of those associative $n$-ary operations $F\colon X^n\to X$ that satisfy
$$
F(x_1,\ldots,x_n)\in\{x_1,\ldots,x_n\},\quad \text{whenever} \,(x_1,\ldots,x_n)\in D^n_k.
$$
We say that these operations are \emph{quasitrivial on $D^n_k$}.

Thus defined, $\mathcal{F}^n_1$ is exactly the class of quasitrivial associative $n$-ary operations and $\mathcal{F}^n_n$ is exactly the class of idempotent associative $n$-ary operations. It follows directly from the definition of the classes $\mathcal{F}^n_k$ that $\mathcal{F}^n_1 = \mathcal{F}^n_2 = \cdots = \mathcal{F}^n_n$ if $|X| \leq 2$. Therefore, throughout the rest of this paper we assume that $|X| \geq 3$. Since the sets $D^n_k$ are nested in the sense that $D^n_{k+1}\subseteq D_k^n$ for $1\leq k\leq n-1$, the classes $\mathcal{F}^n_k$ clearly form a filtration, that is,
\[
\mathcal{F}^n_1\subseteq\mathcal{F}^n_2\subseteq\cdots\subseteq \mathcal{F}^n_n.
\]

Quite surprisingly, we have the following result, which shows that this filtration actually reduces to three nested classes only.

\begin{proposition}\label{prop:QTDn2}
For every $n\geq 3$, we have $\mathcal{F}^n_1=\mathcal{F}^n_{n-2}$.
\end{proposition}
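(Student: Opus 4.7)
The inclusion $\mathcal{F}^n_1\subseteq\mathcal{F}^n_{n-2}$ is immediate from the filtration. The case $n=3$ is vacuous since then $n-2=1$, so I focus on $n\geq 4$. I take $F\in\mathcal{F}^n_{n-2}$, fix an arbitrary $(y_1,\ldots,y_n)\in X^n$, set $a=y_1$, and aim to show $F(y_1,\ldots,y_n)\in\{y_1,\ldots,y_n\}$.

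The main tool I would use is the identity
\[
F(k\Cdot a,\, z_{k+1}, z_{k+2}, z_{k+3},\ldots, z_n) \;=\; F\bigl((k+1)\Cdot a,\, F((n-2)\Cdot a,\, z_{k+1}, z_{k+2}),\, z_{k+3},\ldots, z_n\bigr),
\]
valid for every $1\leq k\leq n-2$ and every $z_{k+1},\ldots, z_n\in X$. To derive it, I apply associativity to the nested expression $F(F(t_1,\ldots,t_n), t_{n+1},\ldots, t_{2n-1})$ with $(t_1,\ldots,t_{2n-1}) = ((k+n-1)\Cdot a,\, z_{k+1},\ldots, z_n)$ and compare two groupings: nesting the inner $F$ on the first $n$ arguments reproduces the left-hand side, because $F(n\Cdot a)=a$ by idempotency (automatic since $\mathcal{F}^n_{n-2}\subseteq\mathcal{F}^n_n$); nesting it on arguments $k+2,\ldots, k+n+1$ yields the right-hand side. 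In the latter grouping the inner $F$ is evaluated at $((n-2)\Cdot a,\, z_{k+1}, z_{k+2})\in D^n_{n-2}$, so by hypothesis its value lies in $\{a, z_{k+1}, z_{k+2}\}$.

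Starting from $F(a, y_2,\ldots, y_n)$, successive applications of the identity for $k=1, 2,\ldots, n-3$ produce elements $w_1,\ldots, w_{n-3}$ with $w_k\in\{a, w_{k-1}, y_{k+2}\}$ (setting $w_0:=y_2$) such that
\[
F(y_1, y_2,\ldots, y_n) \;=\; F\bigl((n-2)\Cdot a,\, w_{n-3},\, y_n\bigr).
\]
A quick induction on $k$ gives $w_{n-3}\in\{a, y_2,\ldots, y_{n-1}\}$, so the right-hand side---an evaluation of $F$ at a tuple in $D^n_{n-2}$---lies in $\{a, w_{n-3}, y_n\}\subseteq\{y_1,\ldots, y_n\}$ by hypothesis, establishing quasitriviality.

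The hard part is the clean verification of the key identity via the two groupings of the nested expression; once that is in hand, the iteration is just bookkeeping. It is precisely the bound $n-2$ in the hypothesis $F\in\mathcal{F}^n_{n-2}$ that is used at every step, to control the inner value $F((n-2)\Cdot a, z_{k+1}, z_{k+2})$ featuring two arbitrary entries flanked by $n-2$ copies of $a$.
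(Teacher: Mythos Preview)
Your argument is correct and follows essentially the same route as the paper: both rest on the identity
\[
F(k\Cdot a, z_{k+1},\ldots, z_n)=F\bigl((k+1)\Cdot a,\, F((n-2)\Cdot a, z_{k+1}, z_{k+2}),\, z_{k+3},\ldots, z_n\bigr),
\]
obtained from associativity and idempotency, together with the hypothesis $F\in\mathcal{F}^n_{n-2}$ applied to the inner term. The only cosmetic difference is that the paper packages the iteration as a downward induction on $k$ for the general claim $F(k\Cdot x_1,x_{k+1},\ldots,x_n)\in\{x_1,x_{k+1},\ldots,x_n\}$, whereas you iterate forward on a fixed tuple while tracking the auxiliary elements $w_k$.
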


The proof is deferred until Section \ref{Sect:2}, and so are the proofs of the other results in this introduction.

We observe that the class $\mathcal{F}^n_1=\mathcal{F}^n_2=\cdots=\mathcal{F}^n_{n-2}$ was characterized by Couceiro and Devillet~\cite{CouDev} who showed that all its elements are reducible. More precisely, the following result summarizes \cite[Corollary 3.8]{CouDev} and \cite[Corollary 3.11]{CouDev}.

\begin{proposition}\label{prop:imp}
If $F\colon X^n \to X$ is an associative quasitrivial operation, then $|E_F|\leq 2$ and $F$ has either one or two binary reductions. Furthermore, the binary reductions depend on $E_F$ as follows.
\begin{enumerate}
\item[(a)] If $E_F=\varnothing$, then the operation $G\colon X^2 \to X$ defined for every $x,y\in X$ by
$$
G(x,y)=F((n-1)\Cdot x,y)=F(x,(n-1)\Cdot y)
$$ is the only binary reduction of $F$, and $G$ is quasitrivial.
\item[(b)] If $E_F=\{e\}$, then the operation $G_e\colon X^2 \to X$ defined for every $x,y\in X$ by
$$
G_e(x,y)=F(x,(n-2)\Cdot e,y)=F((n-1)\Cdot x,y)=F(x,(n-1)\Cdot y)
$$
is the only binary reduction of $F$, and $G_e$ is quasitrivial.
\item[(c)] If $E_F=\{e_1,e_2\}$ (with $e_1\neq e_2$), then the operations $G_{e_1},G_{e_2}\colon X^2 \to X$ defined for every $x,y\in X$ by
$$
G_{e_1}(x,y)=F(x,(n-2)\Cdot e_1,y) \quad \mbox{and} \quad G_{e_2}(x,y)=F(x,(n-2)\Cdot e_2,y)
$$
are the only binary reductions of $F$ (and $G_{e_1}\neq G_{e_2}$). Neither of $G_{e_1}$ and $G_{e_2}$ is quasitrivial  and, in this case, the identity $F((n-1)\Cdot x,y)=F(x,(n-1)\Cdot y)$ does not hold.
\end{enumerate}
\end{proposition}
Proposition \ref{prop:imp} is of particular interest since the class of associative and quasitrivial binary operations was characterized by L\"anger in \cite[Theorem 1]{Lan80}.

In this paper, we provide a characterization of the class $\mathcal{F}^n_{n-1}\setminus\mathcal{F}^n_1$. We show that all of its elements are also reducible to binary associative operations. We give a full description of the possible reductions of the operations in this class.

Let us begin with the particular case when all the elements in $X$ are neutral.
Recall that a group $(X,G)$ with neutral element $e$ has \emph{bounded exponent} if there exists an integer $m\geq 1$ such that $G^{m-1}(m\Cdot x) = e$ for any $x\in X$ (with the usual convention that $G^0(x)=x$ for every $x\in X$). In that case, the \emph{exponent} of the group is the smallest integer having this property. The following result provides a description of the class of $n$-ary semigroups containing only neutral elements. It was stated without proof in \cite[p.~2]{DudGla08} in the framework of $n$-ary groups, but it can be easily extended to $n$-ary semigroups by using \cite[Corollary~4]{DudGla08}. For the sake of completeness, we provide a direct proof that basically uses \cite[Lemma~1]{DudMuk06}.

\begin{theorem}\label{thm:main1}
Let $F\colon X^n \to X$ ($n\geq 3$) be an associative operation. Then $E_F=X$ if and only if $(X,F)$ is the $n$-ary extension of an Abelian group whose exponent divides $n-1$.
\end{theorem}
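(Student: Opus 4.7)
The plan is to treat the two implications separately. The backward direction is a short computation: assuming $(X,F)$ is the $n$-ary extension of an Abelian group $(X,G)$ of exponent dividing $n-1$ with identity $e$, we expand the extension in multiplicative group notation to obtain, for any $y\in X$ and any $k\in\{1,\ldots,n\}$, that $F((k-1)\Cdot y,x,(n-k)\Cdot y) = y^{k-1}\cdot x\cdot y^{n-k}$; commutativity together with $y^{n-1}=e$ collapses this to $x$, whence $E_F=X$.

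For the forward direction, I would fix any $e\in X$; since by hypothesis $e\in E_F$, the Dudek--Mukhin theorem cited in the introduction (applied with $\tilde F=F$) guarantees that $F$ is reducible to the associative binary operation $G(x,y)=F(x,(n-2)\Cdot e,y)$, and it is immediate from the neutrality of $e$ for $F$ that $e$ is a neutral element for $G$. Thus $(X,G)$ is a monoid, and the remaining task is to upgrade it to an Abelian group of exponent dividing $n-1$.

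The heart of the proof is to exploit that every $y\in X$, not only the chosen $e$, is neutral for $F$. For the bounded exponent, the neutrality of $y$ at position $k=n$ applied to $e$ yields $F((n-1)\Cdot y,e)=e$; since $F=G^{n-1}$ right-associates and $G(y,e)=y$, the left-hand side simplifies to $y^{n-1}$, so $y^{n-1}=e$ for every $y\in X$, and consequently each $y$ admits the two-sided inverse $y^{n-2}$, making $(X,G)$ a group of exponent dividing $n-1$. The step I expect to be the main obstacle is commutativity, but it drops out of the same principle: the neutrality of $y$ at position $k=2$ applied to $z$ reads $F(y,z,(n-2)\Cdot y)=z$, which through the reduction becomes $y\cdot z\cdot y^{n-2}=z$; right-multiplying by $y$ and invoking $y^{n-1}=e$ collapses this to $yz=zy$. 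Since by construction $F$ is the $n$-ary extension of this Abelian group, the theorem follows.
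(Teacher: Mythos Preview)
Your proof is correct and follows essentially the same approach as the paper: reduce $F$ to the binary operation $G_e(x,y)=F(x,(n-2)\Cdot e,y)$ via a fixed neutral element $e$, then exploit the neutrality of every $y\in X$ for $F$ to show that $(X,G_e)$ is an Abelian group of exponent dividing $n-1$. The only minor variation is in the commutativity step: the paper first invokes Lemma~\ref{lem:symgeneral} and Corollary~\ref{cor:mon} to conclude that $F$ itself is symmetric (hence $G_e$ is commutative), whereas you extract commutativity directly from the single identity $y\,z\,y^{n-2}=z$ combined with the already-established relation $y^{n-1}=e$.
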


Abelian groups having bounded exponent play a central role in this first result, but also in the next theorems. We recall that Pr\"{u}fer and Baer (see, e.g., \cite[Corollary 10.37]{Rot95}) showed that if an Abelian group has bounded exponent, then it is isomorphic to a direct sum of cyclic groups. Hence, the exponent of an Abelian group divides $n-1$ if and only if the Abelian group is isomorphic to a direct sum of cyclic groups whose orders divide $n-1$.

Theorem \ref{thm:main1} also highlights the fact that an $n$-ary associative operation may have several reductions, associated with distinct neutral elements. For instance, the ternary sum on $\Z_2$ has two neutral elements, namely $0$ and $1$. It is reducible to the operations $G_0,G_1\colon \Z_2^2 \to \Z_2$ defined by $G_0(x,y) = x+y ~(\text{mod}~ 2)$ and $G_1(x,y) = x+y+1 ~(\text{mod}~ 2)$, respectively. We can also easily see that the semigroups $(\Z_2,G_0)$ and $(\Z_2,G_1)$ are isomorphic. In fact, this result can be generalized to other underlying sets: all reductions obtained in this way are isomorphic, as stated in the following result.

\begin{proposition}\label{prop:conjRed}
Let $F\colon X^n\to X$ ($n\geq 3$) be an associative operation such that $E_F\neq\varnothing$. Then every reduction of $F$ is of the form  $G_e$ for some $e\in E_F$. Moreover, if $e_1,e_2\in E_F$, then $(X,G_{e_1})$ and $(X,G_{e_2})$ are isomorphic.
\end{proposition}

In order to state one of the main results of this paper, we shall make use of the following classes of operations. Recall that an element $a\in X$ is said to be an \emph{annihilator} for $F\colon X^n\to X$ if $F(x_1,\ldots,x_n)=a$ whenever $a\in\{x_1,\ldots,x_n\}$.

\begin{definition}\label{def:H}
For every integer $m\geq 1$, let $\mathcal{H}_m$ be the class of binary operations $G\colon X^2 \to X$ such that there exists a subset $Y\subseteq X$ with $|Y|\geq 3$ for which the following assertions hold.
\begin{enumerate}
\item[(a)] $(Y,G|_{Y^2})$ is an Abelian group whose exponent divides $m$.
\item[(b)] $G|_{(X\setminus Y)^2}$ is associative and quasitrivial.
\item[(c)] Any $x\in X\setminus Y$ is an annihilator for $G|_{(\{x\}\bigcup Y)^2}$.
\end{enumerate}
\end{definition}

Note that $\mathcal{H}_1=\varnothing$. As we will see, all operations in $\mathcal{H}_m$ are associative, and the set $Y$ is unique.
In fact, the family of classes $\mathcal{H}_m$ is the key for the characterization of the classes $\mathcal{F}^n_{n-1}\setminus\mathcal{F}^n_1$.

\begin{theorem}\label{cor:cons}
Every $G\in \mathcal{H}_m$ is associative ($m\geq 1$). If $G\in \mathcal{H}_{n-1}$, then its $n$-ary extension $F=G^{n-1}$ is in $\mathcal{F}^n_{n-1}\setminus\mathcal{F}^n_1$. Conversely, for every $F\in\mathcal{F}^n_{n-1}\setminus\mathcal{F}^n_1$ we have that  $|E_F|\geq 3$, and the reductions of $F$ are exactly the operations $G_e$ for $e\in E_F$ and they lie in $\mathcal{H}_{n-1}$.
\end{theorem}

As an immediate corollary we solve the reducibility problem for operations in $\mathcal{F}^n_{n-1}$.
\begin{corollary}\label{cor:dud}
Every operation in $\mathcal{F}^n_{n-1}$ is reducible to a binary associative operation.
\end{corollary}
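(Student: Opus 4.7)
The plan is to split into two cases according to the filtration $\mathcal{F}^n_1\subseteq \mathcal{F}^n_{n-1}$: any $F\in\mathcal{F}^n_{n-1}$ either is quasitrivial (and hence lies in $\mathcal{F}^n_1$) or lies in $\mathcal{F}^n_{n-1}\setminus\mathcal{F}^n_1$. In the quasitrivial subcase the conclusion is immediate from the result of Couceiro and Devillet \cite[Corollary 4]{CouDev} recalled in the introduction, according to which every associative quasitrivial $n$-ary operation on $X$ is reducible to an associative binary operation.

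For the remaining subcase I would invoke Theorem \ref{thm:main2} to obtain a subset $Y=E_F$ with $|Y|\geq 3$; in particular, $F$ admits at least one neutral element $e\in Y\subseteq X$. The Dudek--Mukhin criterion \cite[Theorem 1]{DudMuk06}, applied with the trivial extension $\tilde F=F$ on $X\cup\{e\}=X$ (no genuine adjunction is needed since $e$ already belongs to $X$), then furnishes the reducibility of $F$, the explicit binary reduction being
\[
G_e(x,y)~=~F(x,(n-2)\Cdot e,y),\qquad x,y\in X,
\]
whose associativity follows at once from the $n$-ary associativity of $F$ together with the neutrality of $e$.

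In short, all the substantive content has been absorbed by Theorem \ref{thm:main2}, which guarantees the existence of a neutral element in the non-quasitrivial case. Once this is in hand, Dudek--Mukhin supplies the reduction for free, so no additional obstacle arises at the level of the corollary itself; the only mild care needed is to observe that the standard reduction formula makes sense even when the neutral element already belongs to $X$, so that no fictitious element has to be created.
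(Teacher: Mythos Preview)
Your proof is correct and follows essentially the same two-case split as the paper: invoke \cite[Corollary 4]{CouDev} when $F$ is quasitrivial, and in the non-quasitrivial case exhibit a neutral element and apply Dudek--Mukhin. The only cosmetic difference is that the paper cites the intermediate Proposition~\ref{prop:neutregeneral} (which already yields $E_F\neq\varnothing$) rather than the full Theorem~\ref{thm:main2}; since the proof of Theorem~\ref{thm:main2} does not rely on Corollary~\ref{cor:dud}, your appeal to it introduces no circularity.
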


Theorem \ref{cor:cons} is of particular interest as it enables us to easily construct $n$-ary operations in $\mathcal{F}^n_{n-1}\setminus\mathcal{F}^n_1$. For instance, for any integers $n\geq 3$ and $p\geq 1$, the operation of the cyclic group $(\Z_n,+)$ is in $\mathcal{H}_{np}$, and thus the operation associated with its $(np+1)$-ary extension is in $\mathcal{F}^{np+1}_{np}\setminus\mathcal{F}^{np+1}_1$.

To give another example, consider the chain $(X,\leq) = (\{1,2,3,4,5\},\leq)$ together with the operation $G\colon X^2 \to X$ defined by the following conditions:
\begin{itemize}
\item $(\{1,2,3\},G|_{\{1,2,3\}^2})$ is isomorphic to $(\Z_3,+)$,
\item $G|_{\{4,5\}^2} = \vee|_{\{4,5\}^2}$, where $\vee\colon X^2 \to X$ is the maximum operation for $\leq$,
\item for any $x\in \{1,2,3\}$, $G(x,4) = G(4,x) = 4$ and $G(x,5) = G(5,x) = 5$.
\end{itemize}
Then we have $G\in \mathcal{H}_{3p}$ for any integer $p\geq 1$ and so $G^{3p}$ is in $\mathcal{F}^{3p+1}_{3p}\setminus\mathcal{F}^{3p+1}_1$.

Now we give a reformulation of Theorem \ref{cor:cons} that is not based on binary reductions.

\begin{theorem}\label{thm:main2}
If $F\in\mathcal{F}^n_{n-1}\setminus\mathcal{F}^n_1$, then, setting $Y=E_F$, we have that $|Y|\geq 3$ and the following assertions hold.
\begin{enumerate}
\item[(a)] $(Y,F|_{Y^n})$ is the $n$-ary extension of an Abelian group whose exponent divides $n-1$.
\item[(b)] $F|_{(X\setminus Y)^n}$ is associative, quasitrivial, and has at most one neutral element.
\item[(c)] For all $x_1,\ldots,x_n\in X$ and $i\in\{1,\ldots,n-1\}$ such that $\{x_i,x_{i+1}\}\cap (X\setminus Y)=\{x\}$ we have
\[F(x_1,\ldots,x_n)=F(x_1,\ldots,x_{i-1},x,x,x_{i+2},\ldots,x_n).\]
\end{enumerate}
Conversely, if an operation $F$ satisfies these conditions for some $Y\subseteq X$ with $|Y|\geq 3$, then $F\in\mathcal{F}^n_{n-1}\setminus\mathcal{F}^n_1$ and $E_F=Y$.
\end{theorem}

Proposition \ref{prop:QTDn2} shows that all operations in $\mathcal{F}^n_{n-2}$ are quasitrivial. The examples we just presented show that there are operations in $\mathcal{F}^n_{n-1}$ that are not quasitrivial, for some $n\geq 3$ and some sets $X$. Theorem \ref{cor:cons} enables us to provide necessary and sufficient conditions on the set $X$ for such operations to exist. 
\begin{definition}\label{def:cm}
 For any integer $m\geq 2$, let $c_m$ denote the cardinality of the smallest Abelian group with at least three elements whose exponent divides $m$.
\end{definition}

\begin{proposition}\label{cor:triv}
For every $n\geq 3$, we have $\mathcal{F}^n_{n-1}\setminus\mathcal{F}^n_1\neq \varnothing$ if and only if $|X|\geq c_{n-1}$.
\end{proposition}

\begin{corollary}\label{cor:triv2}
For any integer $n\geq 3$, let $p$ be the least odd prime divisor of $n-1$ if $n-1$ is not a power of $2$; otherwise, set $p=4$. The following assertions hold.
\begin{enumerate}
\item[(a)] If $n$ is even, then $\mathcal{F}^n_{n-1}\setminus\mathcal{F}^n_1\neq \varnothing$ if and only if $|X|\geq p$.
\item[(b)] If $n$ is odd, then $\mathcal{F}^n_{n-1}\setminus\mathcal{F}^n_1\neq \varnothing$ if and only if $|X|\geq \min(4,p)$.
\end{enumerate}
\end{corollary}
Finally, we observe that if $(X,\leq)$ is a semilattice that is not a chain, then the $n$-ary operation $F\colon X^n\to X$ defined by $F(x_1,\ldots,x_n)=x_1\vee \ldots\vee x_n$ is in $\mathcal{F}^n_{n}$. However, it is not in $\mathcal{F}^n_{n-1}$ since $F((n-1)\Cdot x,y)\notin\{x,y\}$ whenever $x$ and $y$ are not comparable, i.e, $x\vee y\notin\{x,y\}$. Since such a semilattice structure exists on every set $X$ such that $|X|\geq 3$, we obtain the following result.

\begin{proposition}\label{prop:easy}
For every $n\geq 2$, we have $\mathcal{F}^n_{n}\setminus\mathcal{F}^n_{n-1}\neq \varnothing$ if and only if $|X|\geq 3$.
\end{proposition}

In Section \ref{Sect:2} we give the proofs of the results above, using some more technical statements that may have interest on their own. In Section \ref{Sec:alt} we introduce and investigate an alternative hierarchy of subclasses of idempotent operations. We end the paper by some concluding remarks in Section \ref{Sec:conclusion}.

\section{Technicalities and proofs of the main results}\label{Sect:2}
Let us begin with Proposition ~\ref{prop:QTDn2}, which essentially follows from the very definition of the classes $\mathcal{F}^n_k$.
\begin{proof}[Proof of Proposition~\ref{prop:QTDn2}]
We only need to prove that $\mathcal{F}^n_{n-2}\subseteq\mathcal{F}^n_1$, and so we can assume that $n\geq 4$. Let $F\in\mathcal{F}^n_{n-2}$ and let us show by induction that for every $k\in\{1,\ldots,n\}$ we have
\begin{equation}\label{eq:propQTDn2}
F(k\Cdot x_1,x_{k+1},\ldots,x_n)\in\{x_1,x_{k+1},\ldots,x_n\},\qquad x_1,x_{k+1},\ldots,x_n\in X.
\end{equation}
By the definition of $\mathcal{F}^n_{n-2}$, condition \eqref{eq:propQTDn2} holds for any $k\in\{n-2,n-1,n\}$. Let us now assume that it holds for some $k\in\{2,\ldots,n\}$ and let us show that it still holds for $k-1$. Using associativity and idempotency, we have
\begin{eqnarray*}
F((k-1)\Cdot x_1,x_k,\ldots,x_n) &=& F(F(n\Cdot x_1),(k-2)\Cdot x_1,x_k,\ldots,x_n)\\
&=& F(k\Cdot x_1,F((n-2)\Cdot x_1,x_k,x_{k+1}),\ldots,x_n).
\end{eqnarray*}
By the induction hypothesis, the latter expression lies in $\{x_1,x_{k},\ldots,x_n\}$.

Thus, Equation \eqref{eq:propQTDn2} holds for every $k\in\{1,\ldots,n\}$. Using it for $k=1$, we obtain that $F$ is quasitrivial.
\end{proof}

In Theorem~\ref{thm:main1} and Proposition~\ref{prop:conjRed} we deal with neutral elements. We first state and prove some intermediate results concerning such elements. The following two lemmas were stated and proved in \cite[Theorem 3]{Dud01} for $n$-ary groups. We provide a proof of the first one that does not use the $n$-ary group structure but basically uses \cite[Lemma 1]{DudMuk06}, and we give a slightly different proof for the second one in the framework of $n$-ary semigroups.

\begin{lemma}\label{lem:symgeneral}
Let $F\colon X^n\to X$ be an associative operation and let $e\in E_F$. Then, for any $x_1,\ldots,x_{n-1}\in X$ we have
$$
F(x_1,\ldots,x_{n-1},e) ~=~ F(x_1,\ldots,e,x_{n-1}) ~=~ \cdots ~=~ F(e,x_1,\ldots,x_{n-1}).
$$
Moreover, for any $x\in X$ the restriction $F|_{(\{x\}\bigcup E_F)^n}$ is symmetric.
\end{lemma}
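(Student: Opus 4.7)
The approach is to invoke the Dudek--Mukhin reducibility result recalled in the excerpt: since $F$ has a neutral element $e$, it is reducible to the associative binary operation $G_e(x,y)=F(x,(n-2)\Cdot e,y)$, for which $e$ is also a neutral element. Writing $\star=G_e$, reducibility combined with associativity of $\star$ shows that for any $y_1,\ldots,y_n\in X$ the value $F(y_1,\ldots,y_n)$ coincides with the unambiguous $\star$-product $y_1\star y_2\star\cdots\star y_n$.

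For the first assertion, I would expand each expression $F(x_1,\ldots,x_{i-1},e,x_i,\ldots,x_{n-1})$ as the $\star$-product $x_1\star\cdots\star x_{i-1}\star e\star x_i\star\cdots\star x_{n-1}$ and use that $e$ is neutral for $\star$. The $e$ disappears and one is left with $x_1\star x_2\star\cdots\star x_{n-1}$, which is manifestly independent of the insertion position $i$.

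For the symmetry part, since adjacent transpositions generate $S_n$ it suffices to show that $F(\ldots,y_i,y_{i+1},\ldots)=F(\ldots,y_{i+1},y_i,\ldots)$ whenever all $y_j$ lie in $\{x\}\cup E_F$. If $y_i=y_{i+1}$ the identity is trivial; otherwise at least one of $y_i,y_{i+1}$ lies in $E_F$, because the only element of $\{x\}\cup E_F$ that can fail to be neutral is $x$, and $y_i,y_{i+1}$ cannot both equal $x$. Taking that neutral entry and applying the first assertion---with the remaining $n-1$ arguments taken in their current relative order---shifts the neutral entry by exactly one position, which is precisely the adjacent swap we need.

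I do not foresee any serious obstacle. The only mild point of care is to verify, in the symmetry step, that the first assertion really realizes an adjacent transposition rather than some more general displacement; but this is immediate from the fact that shifting a single entry by one position while keeping the relative order of the remaining $n-1$ entries is, by definition, the adjacent swap. Once the reduction to $G_e$ is in hand, both halves of the lemma become one-line verifications.
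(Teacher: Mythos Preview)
Your proposal is correct and follows essentially the same route as the paper: both reduce $F$ to the binary operation $G_e$ via the Dudek--Mukhin result and then use that $e$ is neutral for $G_e$ to delete it from the $\star$-product, making the insertion position irrelevant. For the second assertion the paper simply says it is a direct consequence of the first, whereas you spell out the adjacent-transposition argument; your extra detail is sound and matches what the paper has in mind.
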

\begin{proof}
Let $x_1,\ldots,x_{n-1}\in X$ and let $G_e$ be the reduction of $F$ defined by \eqref{eq:dud}. For $i\in\{1,\ldots,n-1\}$ we have $G_e(x_i,e)=x_i=G_e(e,x_i)$, which proves the first part of the statement for $n=2$. For $n\geq 3$ we have
\[F(x_1,\ldots,x_{i},e,x_{i+1},\ldots,x_{n-1})=G_e^{n-2}(x_1,\ldots,x_{i-1},G_e(x_i,e),x_{i+1},\ldots,x_{n-1}),\]
and the first part of the statement follows from  the fact that  each $x_i$ commutes with $e$ in $G_e$. The second part is a direct consequence of the first part.
\end{proof}

\begin{lemma}\label{lem:opgeneral}
Let $F\colon X^n \to X$ be an associative operation such that $E_F\neq \varnothing$. Then $F$ preserves $E_F$, i.e., $F(E_F^n)\subseteq E_F$.
\end{lemma}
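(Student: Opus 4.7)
My plan is to pass to the binary reduction of Dudek--Mukhin and recast the claim in purely binary terms. Since $E_F\neq\varnothing$, fix some $e^*\in E_F$ and let $G=G_{e^*}$ be the associative binary operation defined by \eqref{eq:dud}, so that $F$ is the $n$-ary extension of $G$. Writing $\cdot$ for $G$ and $x^{[m]}$ for the $m$-fold $G$-power of $x$, the extension property yields $F(x_1,\ldots,x_n)=x_1\cdot x_2\cdots x_n$. An element $e\in X$ thus belongs to $E_F$ if and only if $e^{[k-1]}\cdot x\cdot e^{[n-k]}=x$ for every $x\in X$ and every $k\in\{1,\ldots,n\}$, so the task reduces to verifying such identities for $e=F(e_1,\ldots,e_n)=e_1\cdots e_n$.

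The crucial intermediate step is to establish, for every $e'\in E_F$, the two facts:
\begin{enumerate}
\item[(i)] $e'$ is central in $(X,G)$, i.e., $e'\cdot x=x\cdot e'$ for all $x\in X$;
\item[(ii)] $(e')^{[n-1]}=e^*$.
\end{enumerate}
Fact (i) follows from Lemma~\ref{lem:symgeneral} applied to $F(e',(n-2)\Cdot e^*,x)=F(x,(n-2)\Cdot e^*,e')$, since both $e^*$ and $e'$ are neutral; in terms of $G$ this rewrites as $e'\cdot x=x\cdot e'$. Fact (ii) is immediate: $e^*=F((n-1)\Cdot e',e^*)=(e')^{[n-1]}\cdot e^*=(e')^{[n-1]}$.

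With (i) and (ii) in hand, the conclusion is short. Each $e_i$ is central, so $e=e_1\cdots e_n$ is central as well. Moreover, since centrality of the $e_i$'s forces them to commute pairwise in $G$, the $(n-1)$-fold $G$-power factors as
\[
e^{[n-1]}~=~(e_1\cdots e_n)^{[n-1]}~=~e_1^{[n-1]}\cdots e_n^{[n-1]}~=~(e^*)^{[n]}~=~e^*.
\]
Then for any $x\in X$ and $k\in\{1,\ldots,n\}$, centrality of $e$ gives
\[
F((k-1)\Cdot e,x,(n-k)\Cdot e)~=~e^{[k-1]}\cdot x\cdot e^{[n-k]}~=~x\cdot e^{[n-1]}~=~x\cdot e^*~=~x,
\]
so $e\in E_F$.

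The main obstacle I anticipate is the factoring $(e_1\cdots e_n)^{[n-1]}=e_1^{[n-1]}\cdots e_n^{[n-1]}$: this identity genuinely relies on pairwise commutativity of the $e_i$'s in $G$, which is exactly what step~(i) delivers. Without centrality, the $(n-1)$-fold expansion would interleave the $e_i$'s in an uncontrolled way and would not collapse to $(e^*)^{[n]}$. Everything else is routine bookkeeping with associativity and Lemma~\ref{lem:symgeneral}.
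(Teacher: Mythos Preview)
Your proof is correct. Both it and the paper's argument rest on Lemma~\ref{lem:symgeneral}, but the organization differs. The paper stays at the $n$-ary level: it expands $F((n-1)\Cdot F(e_1,\ldots,e_n),x)$ as an $F^{n-1}$ expression, uses Lemma~\ref{lem:symgeneral} to permute the $e_i$'s freely (they form a multiset with $n-1$ copies of each $e_i$), regroups as $F(F(e_1,(n-1)\Cdot e_2),\ldots,F(e_1,(n-1)\Cdot e_n),x)$, and then collapses each inner block to $e_1$ using neutrality of $e_j$; this handles one position of $x$, and the others are analogous. Your route instead passes to the binary reduction $G_{e^*}$ and isolates two structural facts---centrality of every $e'\in E_F$ in $(X,G_{e^*})$ and $(e')^{[n-1]}=e^*$---from which the neutrality of $e_1\cdots e_n$ drops out by the factorisation $(e_1\cdots e_n)^{[n-1]}=e_1^{[n-1]}\cdots e_n^{[n-1]}=(e^*)^{[n]}=e^*$. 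The paper's computation is slightly shorter; your approach makes the underlying group-theoretic mechanism explicit and in fact anticipates the proof of Theorem~\ref{thm:main1}, where exactly these two facts (commutativity and the exponent condition) are the payoff. One small cosmetic point: in your final display the case $k=1$ involves $e^{[0]}$, which you should read as the identity $e^*$ of $(X,G_{e^*})$; this is harmless but worth stating.
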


\begin{proof}
Let $e_1,\ldots,e_n\in E_F$ and let us show that $F(e_1,\ldots,e_n)\in E_F$. By Lemma \ref{lem:symgeneral} and associativity of $F$, for any $x\in X$ we have
\begin{multline*}
F((n-1)\Cdot F(e_1,\ldots,e_n),x) \\
=~ F(F(e_1,(n-1)\Cdot e_2),F(e_1,(n-1)\Cdot e_3),\ldots,F(e_1,(n-1)\Cdot e_n),x) \\
=~ F((n-1)\Cdot e_1,x) ~=~ x.
\end{multline*}
Similarly, for any $x\in X$ we can show that
$$
F(i\Cdot F(e_1,\ldots,e_n),x,(n-i-1)\Cdot F(e_1,\ldots,e_n)) ~=~ x, \qquad i\in \{0,\ldots,n-2\}.
$$
Thus $F(e_1,\ldots,e_n)$ satisfies \eqref{neutralelem}, i.e.,  $F(e_1,\ldots,e_n)\in E_F$.
\end{proof}

Combining Lemmas \ref{lem:symgeneral} and \ref{lem:opgeneral}, we immediately derive the following result.

\begin{corollary}\label{cor:mon}
If $(X,F)$ is an $n$-ary monoid, then $(E_F,F|_{E_F^n})$ is a symmetric $n$-ary monoid.
\end{corollary}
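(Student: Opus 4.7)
The plan is to verify, in order, the three ingredients that make $(E_F,F|_{E_F^n})$ a symmetric $n$-ary monoid: well-definedness of the restriction, existence of a neutral element, and symmetry. Because both Lemma \ref{lem:symgeneral} and Lemma \ref{lem:opgeneral} have just been proved, the corollary will reduce to a direct combination of them with essentially no new computation.

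First I would apply Lemma \ref{lem:opgeneral}, which gives $F(E_F^n)\subseteq E_F$, so that $F|_{E_F^n}\colon E_F^n\to E_F$ is indeed a well-defined $n$-ary operation, and it inherits associativity from $F$ trivially. Since $(X,F)$ is an $n$-ary monoid, we may pick some $e\in E_F$; then for every $x\in X$ and every $k\in\{1,\ldots,n\}$ we have $F((k-1)\Cdot e,x,(n-k)\Cdot e)=x$, and restricting $x$ to $E_F$ shows that $e$ remains a neutral element for $F|_{E_F^n}$. Hence $(E_F,F|_{E_F^n})$ is an $n$-ary monoid.

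Second, for symmetry, I would invoke the second part of Lemma \ref{lem:symgeneral}, which states that $F|_{(\{x\}\bigcup E_F)^n}$ is symmetric for every $x\in X$. The point is simply to choose $x\in E_F$: then $\{x\}\bigcup E_F=E_F$, and so $F|_{E_F^n}$ is symmetric, finishing the proof.

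I do not foresee a genuine obstacle here, as the corollary is designed to follow immediately from the two preceding lemmas. The only subtlety worth flagging is the choice $x\in E_F$ in the application of Lemma \ref{lem:symgeneral}: taking $x\in X\setminus E_F$ would give symmetry on a strictly larger set but would not directly yield symmetry of the restriction to $E_F^n$ as an operation with values in $E_F$. Once that choice is made explicit, the verification is mechanical.
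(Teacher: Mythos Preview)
Your proof is correct and follows exactly the paper's approach: the paper simply states that the corollary is obtained by combining Lemmas~\ref{lem:symgeneral} and~\ref{lem:opgeneral}, which is precisely what you do. (Your closing remark is slightly off, though harmless: symmetry of $F$ on $(\{x\}\cup E_F)^n$ for any $x$ already restricts to symmetry on $E_F^n$, so no special choice of $x$ is needed.)
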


\begin{proof}[Proof of Theorem \ref{thm:main1}]
(Sufficiency) Obvious.

(Necessity) Suppose that $X=E_F$. Let $e\in E_F$ and let $G_e\colon X^2 \to X$ be the corresponding reduction of $F$ defined by \eqref{eq:dud}. Recall that $e$ is the (unique) neutral element of $G_e$ by \eqref{eq:dud}. By Corollary \ref{cor:mon}, we have that $F$ is symmetric. Thus, we have that $G_e$ also is symmetric. Moreover, since $G_e$ is a binary reduction of $F$ and $E_F=X$, it follows that
$$
G_e(G_e^{n-2}((n-1)\Cdot x),y) ~=~ y ~=~ G_e(y,G_e^{n-2}((n-1)\Cdot x)), \qquad x,y\in X,
$$
which shows that $G_e^{n-2}((n-1)\Cdot x)\in E_{G_e}$ for any $x\in X$. However, since $E_{G_e}=\{e\}$, we have that $G_e^{n-2}((n-1)\Cdot x)=e$ for any $x\in X$. Thus, $(X,G_e)$ is an Abelian group whose exponent divides $n-1$.
\end{proof}

The following result follows immediately from Theorem \ref{thm:main1}.

\begin{corollary}\label{cor:main2}
If $(X,F)$ is an $n$-ary monoid, then $(E_F,F|_{E_F^n})$ is the $n$-ary extension of an Abelian group whose exponent divides $n-1$.
\end{corollary}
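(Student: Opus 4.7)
The plan is to derive Corollary~\ref{cor:main2} as a direct consequence of Theorem~\ref{thm:main1} applied to the restricted structure $(E_F, F|_{E_F^n})$. Since $(X,F)$ is an $n$-ary monoid, $E_F$ is nonempty, so Lemma~\ref{lem:opgeneral} applies: the operation $F$ preserves $E_F$, meaning $F(E_F^n) \subseteq E_F$. Consequently, the restriction $F|_{E_F^n}$ is a genuine $n$-ary operation on $E_F$, and it inherits associativity from $F$.

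Next, I would verify that the hypothesis of Theorem~\ref{thm:main1} is satisfied for this restriction, namely that every element of $E_F$ is neutral for $F|_{E_F^n}$. This is essentially immediate: for any $e \in E_F$ and any $e_1,\ldots,e_{n-1} \in E_F$, the identity
\[
F|_{E_F^n}((k-1)\Cdot e, e_i, (n-k)\Cdot e) ~=~ e_i
\]
for $k \in \{1,\ldots,n\}$ is just a special case of the corresponding identity for $F$ on $X$, since $e_i \in X$ and $e$ is neutral for $F$. Thus $E_F$ is contained in the set of neutral elements of the restriction, and the reverse containment is trivial since the restriction lives on $E_F$.

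Having established that $(E_F, F|_{E_F^n})$ is an associative operation whose set of neutral elements coincides with its full underlying set, Theorem~\ref{thm:main1} applies (noting that $n \geq 3$ in that theorem matches the setting), and yields that $(E_F, F|_{E_F^n})$ is the $n$-ary extension of an Abelian group whose exponent divides $n-1$. This is exactly the desired conclusion.

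No serious obstacle is expected; the only subtlety is making sure the restriction is well-defined as an $n$-ary operation on $E_F$, which is precisely the content of Lemma~\ref{lem:opgeneral}. The rest is a transparent application of Theorem~\ref{thm:main1}.
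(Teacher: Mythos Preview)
Your proposal is correct and matches the paper's approach: the paper simply states that the corollary follows immediately from Theorem~\ref{thm:main1}, and you have spelled out exactly the details needed for that application (well-definedness of the restriction via Lemma~\ref{lem:opgeneral}, and the trivial check that every element of $E_F$ is neutral for $F|_{E_F^n}$).
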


\begin{proof}[Proof of Proposition~\ref{prop:conjRed}]
The first part of the statement follows from \cite[Proposition 3.3]{CouDev}.
Moreover, the associativity of $F$ and the definition of neutral elements ensure that the map $\psi\colon X\to X$ defined by
$$
\psi(x) ~=~ F(e_2,x,(n-2)\Cdot e_1)
$$
is a bijection and that $\psi^{-1}(x)=F((n-2)\Cdot e_2,x,e_1)$. We then have
\begin{eqnarray*}
\lefteqn{G_{e_2}(\psi(x),\psi(y))}\\
&=& F(F(e_2,x,(n-2)\Cdot e_1),(n-2)\Cdot e_2,F(e_2,y,(n-2)\Cdot e_1))\\
&=& F(F(e_2,x,(n-2)\Cdot e_1),F((n-1)\Cdot e_2,y),(n-2)\Cdot e_1)\\
&=& F(F(e_2,x,(n-2)\Cdot e_1),y,(n-2)\Cdot e_1)\\
&=& F(e_2,F(x,(n-2)\Cdot e_1,y),(n-2)\Cdot e_1)\\
&=& \psi(G_{e_1}(x,y)),
\end{eqnarray*}
which completes the proof.
\end{proof}

Let us now prove Theorem~\ref{cor:cons}. To this extent, we first state and prove some intermediate results. We have the following remarkable lemma, which characterizes the existence of a pair of neutral elements for $F\in\mathcal{F}_{n-1}^n$ by means of two identities.

\begin{lemma}\label{lem:charne}
Let $F\in\mathcal{F}_{n-1}^n$ and let $a,b\in X$ such that $a\neq b$. Then $a,b\in E_F$ if and only if $F((n-1)\Cdot a,b) = b$ and $F(a,(n-1)\Cdot b) = a$.
\end{lemma}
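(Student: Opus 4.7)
The forward direction is immediate from the definition of neutrality (take $k=n$ for $a$ and $k=1$ for $b$). For the converse, I will strengthen the two asymmetric identities $F((n-1)\Cdot a, b)=b$ and $F(a, (n-1)\Cdot b)=a$ to the full neutrality conditions $F((k-1)\Cdot a, x, (n-k)\Cdot a)=x$ and $F((k-1)\Cdot b, x, (n-k)\Cdot b)=x$ for every $x\in X$ and every $k\in\{1,\ldots,n\}$. The whole argument consists of applying associativity to carefully chosen $(2n-1)$-tuples of elements drawn from $\{a, b\}$ (and, for the last step, a third element $x\in X$), using idempotency (since $\mathcal{F}^n_{n-1}\subseteq\mathcal{F}^n_n$) and quasitriviality on $D^n_{n-1}$ to compute the inner subterms.

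Starting on $\{a,b\}^n$: associativity applied to the tuple $((n-1)\Cdot a, (n-1)\Cdot b, a)$, comparing the splits $i=1$ and $i=n-1$, immediately yields $F((n-1)\Cdot b, a)=a$. Associativity applied to $((n-1)\Cdot b, a, (n-1)\Cdot b)$ then propagates this to $F((j-1)\Cdot b, a, (n-j)\Cdot b)=a$ for all $j$, since any intermediate inner subterm equal to $b$ would force the outer to be $F(n\Cdot b)=b\neq a$. Next, the splits $i=1$, $i=n-1$, and $i=n$ of the tuple $((n-1)\Cdot b, (n-1)\Cdot a, b)$ yield $b$ at $i=1$ and $i=n$, and $F((n-2)\Cdot b, F(b,(n-1)\Cdot a), b)$ at $i=n-1$; assuming $F(b,(n-1)\Cdot a)=a$ would make this outer expression equal $F((n-2)\Cdot b, a, b)=a$ by the previous step, a contradiction, so $F(b,(n-1)\Cdot a)=b$. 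Finally, associativity on $((n-1)\Cdot a, b, (n-1)\Cdot a)$ at its intermediate splits cascades this to $F((k-1)\Cdot a, b, (n-k)\Cdot a)=b$ for every $k$.

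It remains to extend neutrality to an arbitrary $x\in X$. Using $((n-1)\Cdot b, (n-1)\Cdot a, x)$: the value $a$ obtained at $i=1$ and $i=n$ (from the preceding facts) would force the inner $F((n-1)\Cdot b, x)\in\{b, x\}$ at $i=n-1$ to equal $a$ whenever $F((n-1)\Cdot a, x)=a$, which gives $a\in\{b, x\}$, a contradiction; hence $F((n-1)\Cdot a, x)=x$. A symmetric associativity argument on $(x, (n-1)\Cdot a, (n-1)\Cdot b)$ gives $F(x,(n-1)\Cdot a)=x$, and one more application of $((n-1)\Cdot a, x, (n-1)\Cdot a)$ extends these to $F((k-1)\Cdot a, x, (n-k)\Cdot a)=x$ for every $k$. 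Thus $a\in E_F$; the same argument with $a$ and $b$ exchanged yields $b\in E_F$.

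The main obstacle is that, for $n\geq 4$, generic tuples with a mixture of $a$'s and $b$'s produce inner subterms that lie outside $D^n_{n-1}$, so their values are not directly controlled by the quasitriviality hypothesis. The tuples chosen above are designed so that precisely the boundary splits $i=1$, $i=n-1$, and $i=n$ produce inner subterms lying in $D^n_{n-1}$, whose values are already known from the hypotheses, idempotency, or the preceding steps. Equating the three resulting outer expressions then suffices in each case either to extract a new identity or to force a contradiction.
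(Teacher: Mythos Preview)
Your proof is correct in substance, though there is a small slip in the paragraph handling a general $x$: at the split $i=n-1$ of the tuple $((n-1)\Cdot b,(n-1)\Cdot a,x)$, the \emph{inner} term is $F(b,(n-1)\Cdot a)=b$ (by your Step~3), not $F((n-1)\Cdot b,x)$; the expression $F((n-1)\Cdot b,x)$ is the resulting \emph{outer} value after that substitution. With this relabelling your contradiction goes through exactly as intended (the case $x=a$ being handled by idempotency).

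Your route, however, differs genuinely from the paper's. You first build up a complete table of values of $F$ on $\{a,b\}^n$ (Steps~1--4) and then extend to arbitrary $x$ by contradiction. The paper bypasses all of this. For the extreme positions it observes directly, via a single associativity chain using only the two hypotheses, that
\[
F((n-1)\Cdot a,x)=F((n-2)\Cdot a,\,F(a,(n-1)\Cdot b),\,x)=F(F((n-1)\Cdot a,b),\,(n-2)\Cdot b,\,x)=F((n-1)\Cdot b,x),
\]
and since the left-hand side lies in $\{a,x\}$ and the right-hand side in $\{b,x\}$ while $a\neq b$, both equal $x$. For the intermediate positions $k\in\{1,\ldots,n-2\}$ the paper again avoids your cascading contradiction: it notes that $\psi_k(x)=F(k\Cdot a,x,(n-k-1)\Cdot a)$ satisfies $\psi_{n-k-1}\circ\psi_k=\mathrm{id}$ (the composite is $F^2((n-1)\Cdot a,x,(n-1)\Cdot a)=x$ by what was just shown), so each $\psi_k$ is a bijection; since $\psi_k(x)\in\{a,x\}$ and $\psi_k(a)=a$, injectivity forces $\psi_k(x)=x$. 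The paper's argument is therefore considerably shorter and more conceptual; yours is more hands-on and avoids the bijection trick, at the cost of several additional case analyses on $\{a,b\}$.
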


\begin{proof}
(Necessity) Obvious.

(Sufficiency) For any $x\in X$, we have
\begin{eqnarray*}
F((n-1)\Cdot a,x) &=& F((n-2)\Cdot a, F(a,(n-1)\Cdot b),x) \\
                  &=& F(F((n-1)\Cdot a,b),(n-2)\Cdot b,x) ~=~ F((n-1)\Cdot b,x),
\end{eqnarray*}
which implies that $F((n-1)\Cdot a,x)= F((n-1)\Cdot b,x)=x$ for any $x\in X$. Indeed, for $x\in\{a,b\}$ this relation follows from idempotency, and for $x\not\in\{a,b\}$ we have
$$
F((n-1)\Cdot a,x)= F((n-1)\Cdot b,x)\in\{a,x\}\cap\{b,x\}=\{x\},
$$
due to the definition of $\mathcal{F}_{n-1}^n$. Similarly, we get $F(x,(n-1)\Cdot a) = x = F(x,(n-1)\Cdot b)$ for any $x\in X$. It follows from these relations, together with associativity of $F$, that for any $k\in \{1,\ldots,n-2\}$, the maps $\psi_k,\xi_k\colon X \to X$ defined by
\begin{eqnarray*}
\psi_k(x) &=& F(k\Cdot a,x,(n-k-1)\Cdot a)\\
\xi_k(x) &=& F(k\Cdot b,x,(n-k-1)\Cdot b)
\end{eqnarray*}
are bijections with inverse maps $\psi_{n-k-1}$ and $\xi_{n-k-1}$, respectively. It then follows that, for any $k\in \{1,\ldots,n-2\}$, we have $F(k\Cdot a,x,(n-k-1)\Cdot a)=\psi_k(x)=x$ for every $x\in X$. Indeed, for $x=a$, this relation follows from idempotency, and for $x\neq a$, we have $\psi_k(x)\in\{a,x\}$ and $\psi_k(x)\neq a$. Similarly, we can show that $F(k\Cdot b,x,(n-k-1)\Cdot b)=\xi_k(x)=x$ for every $x\in X$, which shows that $a,b\in E_F$.
\end{proof}

Given an associative operation $F\colon X^n \to X$, we can define the sequence $(F^q)_{q\geq 1}$ of $(qn-q+1)$-ary associative operations inductively by the rules $F^1 = F$ and
$$
F^{q}(x_1,\ldots,x_{qn-q+1}) ~=~ F^{q-1}(x_1,\ldots,x_{(q-1)n-q+1},F(x_{(q-1)n-q+2},\ldots,x_{qn-q+1})),
$$
for any integer $q\geq 2$ and any $x_1,\ldots,x_{qn-q+1} \in X$.

The following proposition shows that every $n$-tuple that violates the quasitriviality condition for $F\in\mathcal{F}^n_{n-1}$ belongs to $E_F^n$.

\begin{proposition}\label{prop:neutregeneral}
Let $F\in\mathcal{F}^n_{n-1}$. For any $a_1,\ldots,a_n\in X$ such that
$F(a_1,\ldots,a_n)\notin\{a_1,\ldots,a_n\}$, we have that $a_1,\ldots,a_n,F(a_1,\ldots,a_n)\in E_F$. Moreover, $F|_{(X\setminus E_F)^n}$ is quasitrivial.
\end{proposition}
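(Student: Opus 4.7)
The plan is to first dispatch the ``moreover'' clause as a direct contrapositive of the main claim, and then to prove $a_1,\ldots,a_n,b\in E_F$ whenever $b:=F(a_1,\ldots,a_n)\notin\{a_1,\ldots,a_n\}$. I begin by observing that no element of $X$ can repeat at least $n-1$ times among the $a_i$'s, for otherwise $(a_1,\ldots,a_n)\in D^n_{n-1}$ and the defining property of $\mathcal{F}^n_{n-1}$ would force $b\in\{a_1,\ldots,a_n\}$. Since $F\in\mathcal{F}^n_{n-1}\subseteq\mathcal{F}^n_n$ is idempotent, a direct associativity computation gives
\[
F((n-1)\Cdot a_1,b)=F((n-1)\Cdot a_1,F(a_1,\ldots,a_n))=F(F(n\Cdot a_1),a_2,\ldots,a_n)=b,
\]
and grouping the associated $(2n-1)$-tuple at its right end yields $F(b,(n-1)\Cdot a_n)=b$.

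To invoke Lemma~\ref{lem:charne} on the pair $(a_1,b)$ it remains to prove $F(a_1,(n-1)\Cdot b)=a_1$. Quasitriviality on $D^n_{n-1}$ forces this value into $\{a_1,b\}$, so the task reduces to excluding the possibility $F(a_1,(n-1)\Cdot b)=b$. For that I would exploit the two $F^2$-identities
\[
F(b,c_1,\ldots,c_{n-1})=F(a_1,\ldots,a_{n-1},F(a_n,c_1,\ldots,c_{n-1}))
\]
and
\[
F(c_1,\ldots,c_{n-1},b)=F(F(c_1,\ldots,c_{n-1},a_1),a_2,\ldots,a_n),
\]
obtained by flattening $b=F(a_1,\ldots,a_n)$ inside a length-$(2n-1)$ associative expression. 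Specialising $c_1=\cdots=c_{n-1}=b$ (as well as suitable mixtures of $b$'s and $a_j$'s), and combining with the hypothesis $F(a_1,(n-1)\Cdot b)=b$, quasitriviality on the resulting inner $n$-tuples (each now containing $n-1$ copies of $b$) forces a cascade of derived equalities of the form $F((k-1)\Cdot b,a_i,(n-k)\Cdot b)=b$. Iterating the $F^2$-identity then drives $b$ into positions where quasitriviality eventually collapses it to some $a_i$, the desired contradiction. Lemma~\ref{lem:charne} now yields $a_1,b\in E_F$.

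With $a_1,b\in E_F$ in hand, Lemma~\ref{lem:opgeneral} gives $F(E_F^n)\subseteq E_F$ and Lemma~\ref{lem:symgeneral} permits $a_1$ and $b$ to be permuted freely through the arguments of $F$. To handle each remaining $a_i$ I would rerun the scheme on the pair $(a_i,b)$: the opening identity $F((n-1)\Cdot a_i,b)=b$ now succeeds by first using the symmetry afforded by $a_1\in E_F$ to rearrange the $(2n-1)$-tuple so that $n$ copies of $a_i$ become adjacent (whence idempotency applies), and the contradiction step then proceeds as before. The ``moreover'' clause is immediate: a tuple in $(X\setminus E_F)^n$ whose image escaped its own entries would, by the first part just proved, force those entries into $E_F$, contrary to their choice. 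The principal obstacle is the contradiction argument in the middle paragraph, where one must check that no residual quasitrivial sub-case among the cascade of $b$-equalities remains consistent; the bookkeeping depends sensitively on how many of the $n$ arguments of each inner $F$-call equal $b$, for that is what determines whether quasitriviality on $D^n_{n-1}$ can be invoked.
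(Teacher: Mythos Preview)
Your opening computation $F((n-1)\Cdot a_1,b)=b$ is correct, but the proof then hinges entirely on ruling out $F(a_1,(n-1)\Cdot b)=b$, and this step is not carried out---you yourself flag it as ``the principal obstacle.'' The difficulty is genuine: once you expand $b=F(a_1,\ldots,a_n)$ inside an $F^2$-expression, the inner $n$-tuples you obtain (e.g.\ $F(a_1,(n-2)\Cdot b,a_1)$ or $F(a_n,c_1,\ldots,c_{n-1})$) typically contain fewer than $n-1$ equal entries, so quasitriviality on $D^n_{n-1}$ gives no information about them. I do not see a terminating cascade here, and you have not supplied one.

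The paper avoids this by a different organizing idea: it never works with the pair $(a_1,b)$ at all. Instead it proves by induction on $k$ that whenever $F((n-k)\Cdot a_1,a_2,\ldots,a_{k+1})\notin\{a_1,\ldots,a_{k+1}\}$, all of $a_1,\ldots,a_{k+1}$ lie in $E_F$; the base case $k=1$ is vacuous. In the inductive step one applies Lemma~\ref{lem:charne} to the pair $(a_1,a_2)$. To rule out, say, $F((n-1)\Cdot a_1,a_2)=a_1$, one uses this very equation to pump extra copies of $a_1$ into the tuple, obtaining after regrouping and idempotency an expression $F^2((n-1)\Cdot a_1,(n-k)\Cdot a_2,a_3,\ldots,a_{k+2})$ whose inner call has one fewer distinct argument and hence falls under the induction hypothesis; this yields $a_2\in E_F$, and then a second application of the induction hypothesis produces the contradiction. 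The induction is what supplies the missing leverage: every time you unfold, you land on a tuple with strictly fewer distinct values, where the claim is already known. Your direct attack on $(a_1,b)$ has no such safety net.

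Your endgame---propagating from $a_1$ to $a_2,a_3,\ldots$ by commuting established neutral elements via Lemma~\ref{lem:symgeneral}---is close in spirit to what the paper does at the end of its inductive step, and your derivation of the ``moreover'' clause is fine.
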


\begin{proof}
The case $n=2$ is trivial. So assume that $n\geq 3$.
Let us prove by induction on $k\in\{1,\ldots,n-1\}$ that for every $a_1,a_2,\ldots,a_{k+1}\in X$ the condition
$$
F((n-k)\Cdot a_1,a_2,\ldots,a_{k+1})\notin\{a_1,\ldots,a_{k+1}\}
$$
implies $a_1,\ldots,a_{k+1}\in E_F$. For $k=1$, there is nothing to prove. We thus assume that the result holds true for a given $k\in\{1,\ldots,n-2\}$ and we show that it still holds for $k+1$. Now, consider elements $a_1,\ldots, a_{k+2}$ such that
\begin{equation}\label{eq:imp}
F((n-k-1)\Cdot a_1,a_2,\ldots,a_{k+2})\notin\{a_1,\ldots,a_{k+2}\}.
\end{equation}
We first prove that $a_1,a_2\in E_F$.

If $a_1=a_2$, then $a_1,\ldots,a_{k+2}\in E_F$ by the induction hypothesis.

If $a_1\neq a_2$, then we prove that $F((n-1)\Cdot a_1,a_2)=a_2$ and $F(a_1,(n-1)\Cdot a_2)=a_1$, which show that $a_1,a_2\in E_F$ by Lemma \ref{lem:charne}.

\begin{itemize}
\item For the sake of a contradiction, assume first that $F((n-1)\Cdot a_1,a_2)=a_1$. Then, for $\ell\geq 1$ we have
\begin{eqnarray}
\lefteqn{F((n-k-1)\Cdot a_1,a_2,\ldots,a_{k+2})}\nonumber\\
&=& F^{\ell +1}(((n-k-1)+\ell (n-2))\Cdot a_1,(\ell +1)\Cdot a_2,\ldots,a_{k+2}).\label{fl1}
\end{eqnarray}
Choosing $\ell =n-k-1$ and using idempotency of $F$, we obtain
\[
F((n-k-1)\Cdot a_1,a_2,\ldots,a_{k+2}) ~=~ F^2((n-1)\Cdot a_1,(n-k)\Cdot a_2,a_3,\ldots,a_{k+2}).
\]
Since the left-hand side of this equation does not lie in $\{a_1,\ldots,a_{k+2}\}$ by \eqref{eq:imp}, we obtain
\[
F((n-k)\Cdot a_2,a_3,\ldots,a_{k+2})\notin\{a_1,\ldots,a_{k+2}\}.
\]
By the induction hypothesis, we have $a_2,\ldots,a_{k+2}\in E_F$. Then choosing $\ell = n-2$ in \eqref{fl1} and using idempotency and the fact that $a_2\in E_F$, we obtain
\begin{eqnarray*}
\lefteqn{F((n-k-1)\Cdot a_1,a_2,\ldots,a_{k+2})}\\
&=& F^{n-1}(((n-k-1)+(n-2)^2)\Cdot a_1,(n-1)\Cdot a_2,\ldots,a_{k+2})\\
&=& F^2((n-k)\Cdot a_1,(n-1)\Cdot a_2,a_3,\ldots,a_{k+2})\\
&=& F((n-k)\Cdot a_1,a_3,\ldots,a_{k+2}).
\end{eqnarray*}
By the induction hypothesis, we have $a_1\in E_F$. We then have $F((n-1)\Cdot a_1,a_2)=a_2\neq a_1$, a contradiction.
\item Assume now that $F(a_1,(n-1)\Cdot a_2)=a_2$. Then, for $\ell\geq 1$ we have
\begin{eqnarray*}
\lefteqn{F((n-k-1)\Cdot a_1,a_2,\ldots,a_{k+2})}\nonumber\\
&=& F^{\ell +1}((n-k-1+\ell )\Cdot a_1,(\ell (n-2)+1)\Cdot a_2,\ldots,a_{k+2}).\label{fl2}
\end{eqnarray*}
For $\ell =k$, using idempotency and the fact that $k(n-2)+1=n-k+(k-1)(n-1)$, we obtain
\begin{eqnarray*}
\lefteqn{F((n-k-1)\Cdot a_1,a_2,\ldots,a_{k+2})}\\
&=& F^2((n-1)\Cdot a_1,(n-k)\Cdot a_2,a_3,\ldots,a_{k+2}).
\end{eqnarray*}
Thus, $F((n-k)\Cdot a_2,a_3,\ldots,a_{k+2})\notin\{a_1,\ldots,a_{k+2}\}$. By the induction hypothesis, we have $a_2,\ldots,a_{k+2}\in E_F$.
It follows that $F(a_1,(n-1)\Cdot a_2)=a_1\neq a_2$, a contradiction.
\end{itemize}
Now, since $a_2\in E_F$, it commutes with all other arguments of $F$ by Lemma \ref{lem:symgeneral}. Also, by \eqref{eq:imp} we have
$$
F((n-k-1)\Cdot a_1,a_3,\ldots,a_{k+2},a_2) \notin \{a_1,\ldots,a_{k+2}\},
$$
and thus $a_3\in E_F$. Repeating this argument, we have that $a_1,\ldots,a_{k+2}\in E_F$.

It follows from the induction that if $F(a_1,\ldots,a_n)\notin \{a_1,\ldots,a_n\}$, then $a_1,\ldots,a_n\in E_F$. Finally we have $F(a_1,\ldots,a_n)\in E_F$ by Lemma \ref{lem:opgeneral}. The second part is straightforward.
\end{proof}

Proposition \ref{prop:imp} shows that a quasitrivial $n$-ary semigroup cannot have more than two neutral elements. The next result shows that an operation in $\mathcal{F}^n_{n-1}$ is quasitrivial whenever it has at most two neutral elements.

\begin{corollary}\label{cor:quasi}
An operation $F\in\mathcal{F}^n_{n-1}$ is quasitrivial if and only if $|E_F|\leq 2$.
\end{corollary}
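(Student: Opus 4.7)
The plan is to prove the equivalence by handling each direction separately, with most of the work packaged into results already established.

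For the forward direction, if $F\in\mathcal{F}^n_{n-1}$ is quasitrivial, then $F\in\mathcal{F}^n_1$, and so the bound $|E_F|\leq 2$ is exactly the content of \cite[Proposition 16]{CouDev}, quoted in the paragraph preceding the corollary. Nothing more needs to be said for this implication.

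For the converse, I would argue by contrapositive: assume $F\in\mathcal{F}^n_{n-1}$ is not quasitrivial and show that $|E_F|\geq 3$. Since $F\notin\mathcal{F}^n_1$, we have $F\in\mathcal{F}^n_{n-1}\setminus\mathcal{F}^n_1$, and Theorem~\ref{thm:main2} furnishes a unique subset $Y\subseteq X$ with $|Y|\geq 3$ such that $Y=E_F$. Hence $|E_F|\geq 3$, contradicting $|E_F|\leq 2$. This is the cleanest route and there is no real obstacle.

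As a self-contained alternative that avoids invoking Theorem~\ref{thm:main2}, one can work directly with Proposition~\ref{prop:neutregeneral}: if $F$ is not quasitrivial, pick $a_1,\ldots,a_n\in X$ with $b:=F(a_1,\ldots,a_n)\notin\{a_1,\ldots,a_n\}$. Proposition~\ref{prop:neutregeneral} gives $\{a_1,\ldots,a_n,b\}\subseteq E_F$. Because $F\in\mathcal{F}^n_{n-1}$, the tuple $(a_1,\ldots,a_n)$ cannot lie in $D^n_{n-1}$ (otherwise $b$ would belong to $\{a_1,\ldots,a_n\}$); therefore at least two distinct values $a,a'$ appear among the $a_i$, and together with $b\notin\{a,a'\}$ this gives three distinct elements of $E_F$. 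The only mild subtlety is checking that $(a_1,\ldots,a_n)\notin D^n_{n-1}$, which is immediate from the definition of $\mathcal{F}^n_{n-1}$. Either route closes the proof.
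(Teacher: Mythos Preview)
Your forward direction and your second (``self-contained'') argument for the converse are both correct, and the second argument is essentially the paper's own proof: the paper also invokes Proposition~\ref{prop:neutregeneral} on a non-quasitrivial tuple and then observes that the set $\{a_1,\ldots,a_n,F(a_1,\ldots,a_n)\}\subseteq E_F$ must contain at least three elements (the paper phrases the contradiction via idempotency rather than via $D^n_{n-1}$, but the content is identical).

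However, your \emph{first} route through Theorem~\ref{thm:main2} is circular. In the paper, Corollary~\ref{cor:quasi} is proved \emph{before} Theorem~\ref{thm:main2} and is explicitly used in the proof of that theorem (the ``Necessity'' half cites Corollaries~\ref{cor:main2} and~\ref{cor:quasi} together with Proposition~\ref{prop:anngeneral}). So you cannot appeal to Theorem~\ref{thm:main2} to establish the corollary without unwinding the dependency. Drop that paragraph and keep only the direct argument via Proposition~\ref{prop:neutregeneral}; that is exactly what the paper does, and your version of it is fine.
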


\begin{proof}
(Necessity) This follows from Proposition \ref{prop:imp}.

(Sufficiency) Suppose that $F$ is not quasitrivial, i.e., there exist $a_1,\ldots,a_n\in X$ such that $F(a_1,\ldots,a_n)\notin\{a_1,\ldots,a_n\}$. Since $F$ is idempotent, we must have $|\{a_1,\ldots,a_n\}|\geq 2$ and so $|\{a_1,\ldots,a_n,F(a_1,\ldots,a_n)\}|\geq 3$. We also have $\{a_1,\ldots,a_n,F(a_1,\ldots,a_n)\}\subseteq E_F$ by Proposition \ref{prop:neutregeneral}. Therefore we have $|E_F|\geq 3$.
\end{proof}

\begin{proposition}\label{prop:anngeneral}
Let $F\in\mathcal{F}^n_{n-1}$ and suppose that $|E_F|\geq 3$. Then, any element $x\in X\setminus E_F$ is an annihilator of $F|_{(\{x\}\bigcup E_F)^n}$. Moreover, $F|_{(X\setminus E_F)^n}$ is quasitrivial and has at most one neutral element.
\end{proposition}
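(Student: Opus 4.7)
The plan is to fix an element $e\in E_F$ and analyze $F$ through its binary reduction $G=G_e$ provided by~\cite[Lemma~1]{DudMuk06}, so that $F(a_1,\ldots,a_n)=a_1\cdot a_2\cdots a_n$ in the semigroup $(X,G)$ and, by Corollary~\ref{cor:main2}, $(E_F,G|_{E_F^2})$ is an Abelian group of exponent dividing $n-1$ with identity $e$. Writing $x^k$ for the $k$-fold $G$-power of $x$, my first goal is to show that for every $x\in X\setminus E_F$ and every $e'\in E_F$ one has $x\cdot e'=e'\cdot x=x^2=x$. Once this is in hand, the annihilator property will follow immediately: by the symmetry of $F|_{(\{x\}\cup E_F)^n}$ asserted in Lemma~\ref{lem:symgeneral}, any tuple from $(\{x\}\cup E_F)^n$ containing $x$ can be rewritten as $F(m\Cdot x,e_1,\ldots,e_{n-m})=x^m\cdot(e_1\cdots e_{n-m})=x$ for some $m\geq 1$ and $e_1,\ldots,e_{n-m}\in E_F$.

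To reach those identities, I would first observe that quasitriviality on $D^n_{n-1}$ forces $x^{n-1}=F((n-1)\Cdot x,e)\in\{x,e\}$. The hard step will be ruling out $x^{n-1}=e$: in that case $x$ is invertible in $(X,G)$ with inverse $x^{n-2}$, and quasitriviality then gives $F((k-1)\Cdot x,a,(n-k)\Cdot x)=x^{k-1}\cdot a\cdot x^{n-k}\in\{x,a\}$ for every $a\in X$ and every $k\in\{1,\ldots,n\}$; the alternative that this value equals $x$ yields $a=x^n=x$ upon clearing the powers of $x$ via $x^{n-1}=e$, so the value must be $a$. This would make $x$ neutral for $F$ and hence contradict $x\notin E_F$; therefore $x^{n-1}=x$.

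With $x^{n-1}=x$ secured, the same quasitriviality applied to $F((n-1)\Cdot x,e')=x\cdot e'$ yields $x\cdot e'\in\{x,e'\}$ for every $e'\in E_F$, and the possibility $x\cdot e'=e'$ can be excluded by right-multiplying by the $G$-inverse of $e'$ inside the group $E_F$, which would give $x=e$, contradicting $x\notin E_F$. Hence $x\cdot e'=x$, and Lemma~\ref{lem:symgeneral} immediately gives $e'\cdot x=x$. The identity $x^2=x$ then falls out cleanly: idempotency gives $x^n=x$, and combined with $x^{n-1}=x$ this yields $x^2=x^{n-1}\cdot x=x^n=x$. This completes the proof of the annihilator property.

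The remaining assertions are handled as follows. The quasitriviality of $F|_{(X\setminus E_F)^n}$ is direct from Proposition~\ref{prop:neutregeneral}. For the uniqueness of a potential neutral element, I would argue by contradiction: if $a,b\in X\setminus E_F$ were distinct and both neutral for $F|_{(X\setminus E_F)^n}$, then applying their neutrality to each other would yield $F((n-1)\Cdot a,b)=b$ and $F(a,(n-1)\Cdot b)=a$, and Lemma~\ref{lem:charne} would then force $a,b\in E_F$, a contradiction.
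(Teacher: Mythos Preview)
Your argument is correct. The main difference from the paper's proof lies in how you establish the annihilator property. The paper works directly with $F$: for each $k$ it shows $F(k\Cdot x,(n-k)\Cdot e)=x$ by observing (via Proposition~\ref{prop:neutregeneral}) that the only alternative value is $e$, and then uses associativity together with idempotency to rewrite $F((n-1)\Cdot x,e)=F((n-1)\Cdot x,F(k\Cdot x,(n-k)\Cdot e))=F(k\Cdot x,(n-k)\Cdot e)=e$, whence Lemma~\ref{lem:charne} gives the contradiction $x\in E_F$; the passage from a single $e$ to arbitrary $e_{k+1},\ldots,e_n\in E_F$ is then handled by a short telescoping identity. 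You instead pass immediately to the binary reduction $G=G_e$ and argue inside the monoid $(X,G)$: the assumption $x^{n-1}=e$ makes $x$ a unit, and cancellation then forces $x\in E_F$; the remaining identities follow from $x^2=x$ and the group structure on $E_F$. Your route is a bit more conceptual (it really exploits that a unit in a monoid acts bijectively on both sides), while the paper's is more hands-on with $F$ and yields as a byproduct the identity $F((n-1)\Cdot x,y)=F(x,(n-1)\Cdot y)$ for all $x,y\in X\setminus E_F$, which is a slightly stronger conclusion than what your Lemma~\ref{lem:charne} contradiction gives for the ``at most one neutral element'' claim.
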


\begin{proof}
Let $x\in X\setminus E_F$ and $e\in E_F$ and let us show that $F(k\Cdot x, (n-k)\Cdot e) = x$ for any $k\in \{1,\ldots,n-1\}$. If $k=1$, then this equality follows from the definition of a neutral element. Now, suppose that there exists $k\in \{2,\ldots,n-1\}$ such that $F(k\Cdot x,(n-k)\Cdot e) \neq x$. Since $x\in X\setminus E_F$, by Proposition~\ref{prop:neutregeneral} we must have $F(k\Cdot x, (n-k)\Cdot e) = e$. But then, using the associativity of $F$, we get
\begin{eqnarray*}
F((n-1)\Cdot x,e) &=& F((n-1)\Cdot x,F(k\Cdot x,(n-k)\Cdot e))\\
&=& F(k\Cdot x,(n-k)\Cdot e) ~=~ e,
\end{eqnarray*}
and we conclude by Lemma~\ref{lem:charne} that $x\in E_F$, which contradicts our assumption. Thus, we have
\begin{equation}\label{eq:need1}
F(k\Cdot x, (n-k)\Cdot e) ~=~ x, \qquad k\in \{1,\ldots,n-1\}.
\end{equation}
Now, let us show that $F(k\Cdot x,e_{k+1},\ldots,e_{n}) = x$ for any $k\in \{1,\ldots,n-1\}$ and any $e_{k+1},\ldots,e_{n}\in E_F$. To this extent, we only need to show that
$$
F(k\Cdot x,e_{k+1},\ldots,e_{n}) ~=~ F((k+1)\Cdot x,e_{k+2},\ldots,e_{n}),
$$
for any $k\in \{1,\ldots,n-1\}$ and any $e_{k+1},\ldots,e_{n}\in E_F$.
So, let $k\in \{1,\ldots,n-1\}$ and $e_{k+1},\ldots,e_{n}\in E_F$. Using \eqref{eq:need1} and the associativity of $F$ we get
\begin{eqnarray*}
F(k\Cdot x,e_{k+1},\ldots,e_{n})
&=& F((k-1)\Cdot x,F(2\Cdot x,(n-2)\Cdot e_{k+1}),e_{k+1},\ldots,e_{n}) \\
&=& F(k\Cdot x,F(x,(n-1)\Cdot e_{k+1}),e_{k+2},\ldots,e_{n}) \\
&=& F((k+1)\Cdot x,e_{k+2},\ldots,e_{n}),
\end{eqnarray*}
which completes the proof by idempotency of $F$ and Lemma~\ref{lem:symgeneral}. For the second part of the proposition, we observe that $F|_{(X\setminus E_F)^n}$ is quasitrivial by Proposition~\ref{prop:neutregeneral}. Also, using \eqref{eq:need1} and the associativity of $F$, for any $x,y\in X\setminus E_F$ and any $e\in E_F$ we obtain
\begin{eqnarray*}
F((n-1)\Cdot x,y) &=& F((n-1)\Cdot x,F(e,(n-1)\Cdot y)) \\
                  &=& F(F((n-1)\Cdot x,e),(n-1)\Cdot y) ~=~ F(x,(n-1)\Cdot y),
\end{eqnarray*}
which shows that $F|_{(X\setminus E_F)^n}$ cannot have more than one neutral element.
\end{proof}

\begin{proof}[Proof of Theorem \ref{cor:cons}]
In order to show that every $G\in\mathcal{H}_m$ is associative, we have to compare the expressions $G(G(x_1,x_2),x_3)$ and $G(x_1,G(x_2,x_3))$ for all $x_1,x_2,x_3$ in $X$. Clearly these expressions are equal if all their arguments are either in $Y$ or in $X\setminus Y$ since the restriction of $G$ to these subsets is associative. If one argument, say $x_i$, is in $X\setminus Y$ and the others are in $Y$, then both expressions are equal to $x_i$ by Property (c) of Definition \ref{def:H}. For the same reason, if the arguments $x_i,x_j$ are in $X\setminus Y$ and the the third one in $Y$, then both expressions are equal to $G(x_i,x_j)$.

Now, we consider $G\in\mathcal{H}_{n-1}$ and define $F=G^{n-1}$. Then we have $E_F=Y$. Indeed, conditions (a) and (c) of Definition \ref{def:H} imply directly that $Y\subseteq E_F$. Moreover if $x\notin Y$, then still by condition (c) we have $F((n-1)\Cdot x,y)=x\neq y$ for $y\in Y$, so $x\notin E_F$.

Next, we show that $F(k\Cdot x,y,(n-k-1)\Cdot x)\in\{x,y\}$ for every $x,y\in X$. If $x\in Y$, $x$ is a neutral element, so this expression is equal to $y$. If $x\in X\setminus Y$, then either $y\in Y$ and this expression is equal to $x$ (by condition (c)), or $y\in X\setminus Y$, and this expression is in $\{x,y\}$ (by condition (b)). Finally, $F\notin\mathcal{F}^n_1$ by Corollary \ref{cor:quasi}, since $|E_F|=|Y|\geq 3$.

Now we prove the converse statement and consider $F\in\mathcal{F}^n_{n-1}\setminus \mathcal{F}^n_{1}$. Setting $Y=E_F$ we have $|Y|\geq 3$ by Corollary \ref{cor:quasi}. By Proposition \ref{prop:conjRed}, every reduction of $F$ is of the form $G_e$ for some $e\in E_F$.

Finally, we show that $G_e\in\mathcal{H}_{n-1}$. We have that $(Y,G_e|_{Y^2})$ is an Abelian group whose exponent divides $n-1$ by Corollary \ref{cor:main2} and Proposition \ref{prop:conjRed}. Also, we have that $G_e|_{(X\setminus Y)^2}$ is quasitrivial by Propositions \ref{prop:imp} and \ref{prop:anngeneral}. Finally, we have that any $x\in X\setminus Y$ is an annihilator for $G_e|_{(\{x\}\bigcup Y)^2}$ by Proposition \ref{prop:anngeneral}.
\end{proof}

\begin{proof}[Proof of Corollary \ref{cor:dud}]
This follows from Proposition \ref{prop:imp} and Theorem \ref{cor:cons}.
\end{proof}

\begin{remark}
In the proof of Corollary \ref{cor:dud} we used \cite[Corollary 3.11]{CouDev} which is based on results obtained by Ackerman \cite{Ack}. In the appendix we provide an alternative proof of Corollary \ref{cor:dud} that does not make use of \cite[Corollary 3.11]{CouDev}.
\end{remark}

\begin{proof}[Proof of Theorem \ref{thm:main2}]
If $F\in\mathcal{F}^n_{n-1}\setminus\mathcal{F}^n_1$, then by Theorem \ref{cor:cons} we have $|E_F|\geq 3$ and for every $e\in E_F$, $G_e$ is in $\mathcal{H}_{n-1}$. Then $G_e|_{Y^2}$ is a reduction of $F|_{Y^n}$ and (a) holds true. Also $G_e|_{(X\setminus Y)^2}$ is a quasitrivial reduction of $F|_{(X\setminus Y)^n}$, so (b) holds true by Proposition \ref{prop:imp}. Finally, if $x_i,x_{i+1}$ satisfy the conditions of (c), we have $G_e(x_i,x_{i+1})=x=G_e(x,x)$, so that (c) holds true.

Let us now assume that an operation $F$ satisfies conditions (a), (b), and (c). By (a), there exists an Abelian group $(Y,G_Y)$ whose exponent divides $n-1$ such that $(Y,F|_{Y^n})$ is the $n$-ary extension of $(Y,G_Y)$. We denote by $e$ the neutral element of $G_Y$. We also define the operation $G\colon X^2\to X$ by $G(x,y)=F(x,(n-2)\Cdot e,y)$ for every $x,y\in X$. We now show that $G$ is in $\mathcal{H}_{n-1}$. It is easy to see that $G|_{Y^2}=G_Y$. Then, by condition (c), $G|_{(X\setminus Y)^2}(x,y)=F((n-1)\Cdot x,y)$, so $G|_{(X\setminus Y)^2}$ is the unique quasitrivial reduction of $F|_{(X\setminus Y)^n}$ (see Proposition \ref{prop:imp}). Finally, condition (c) also implies that any $x\in X\setminus Y$ is an annihilator for $G|_{(\{x\}\bigcup Y)^2}$. Then by Theorem \ref{cor:cons}, $G$ is associative and we have $G^{n-1}\in\mathcal{F}^n_{n-1}\setminus\mathcal{F}^n_1$. We conclude the proof by showing that $G^{n-1}=F$. To this aim we compare $G^{n-1}(x_1,\ldots,x_n)$ and $F(x_1,\ldots,x_n)$ for every $(x_1,\ldots,x_n)\in X^n$. We already showed that both expressions coincide if $(x_1,\ldots,x_n)$ belongs to $Y^n$ or $(X\setminus Y)^n$. Otherwise, let us denote by $\sigma_1,\ldots,\sigma_r$ the integers such that $\{i : x_i\in X\setminus Y\} = \{\sigma_1,\ldots,\sigma_r\}$ and $\sigma_1 < \cdots < \sigma_r$. By condition (c) there exist integers $a_1,\ldots,a_r$ such that \[F(x_1,\ldots,x_n)=F(a_1\Cdot x_{\sigma_1},\ldots,a_r\Cdot x_{\sigma_r}).\]
This expression is equal to $G^{r-1}(x_{\sigma_1},\ldots,x_{\sigma_r})$ because $G|_{(X\setminus Y)^2}$ is a quasitrivial reduction of $F|_{(X\setminus Y)^n}$. Using condition (c) in Definition \ref{def:H} for $G\in\mathcal{H}_{n-1}$ we get that this expression is equal to $G^{n-1}(x_1,\ldots,x_n)$.
\end{proof}

\begin{proof}[Proof of Proposition \ref{cor:triv}]
If $\mathcal{F}^n_{n-1}\setminus\mathcal{F}^n_1\neq \varnothing$, then Theorem \ref{cor:cons} implies that there is a subset $Y\subseteq X$ and an Abelian group $(Y,G)$ whose exponent divides $n-1$ and $|Y|\geq 3$. This shows that $|X|\geq |Y| \geq c_{n-1}$. Conversely, assume that $|X|\geq c_{n-1}$. Then we choose a subset $Y\subseteq X$ such that $|Y| = c_{n-1}\geq 3$ and we endow $Y$ with an operation $G_Y$ such that $(Y,G_Y)$ is an Abelian group whose exponent divides $n-1$.

Let us consider the operation $G\colon X^2 \to X$ defined by the conditions that any $x\in X\setminus Y$ is an annihilator for $G|_{(\{x\}\bigcup Y)^2}$, that $G|_{Y^2} = G_Y$, and that $G(x,y)=y$ for any $x,y\in X\setminus Y$.
Then we have $G\in \mathcal{H}_{n-1}$ and so $G^{n-1} \in \mathcal{F}^n_{n-1}\setminus\mathcal{F}^n_1$ by Theorem \ref{cor:cons}, which concludes the proof.
\end{proof}

\begin{proof}[Proof of Corollary \ref{cor:triv2}]
By Proposition \ref{cor:triv} it is sufficient to compute $c_{n-1}$ in the two cases.
\begin{enumerate}
\item[(a)] The cyclic group of order $p$ is an Abelian group with at least three elements whose exponent divides $n-1$, hence $c_{n-1}\leq p$. On the other hand, let $(Y,G)$ be any Abelian group with at least three elements whose exponent $m$ divides $n-1$. Let $q$ be a prime divisor of $m$; then $q$ divides $n-1$, hence $q$ is odd. From the definition of the exponent it follows that $Y$ contains an element of order $q$, thus $|Y|\geq q$. Since $q$ divides $n-1$, we have $q\geq p$ by the minimality of $p$. Therefore, $|Y|\geq q\geq p$, which shows that $c_{n-1}\geq p$.
\item[(b)] If $p=3$, then we can take the group $\mathbb{Z}_p$ as in the previous case; if $p\geq 5$, then we can take the group $\mathbb{Z}_2^2$ (with exponent $2$ dividing $n-1$) in order to see that $c_{n-1}\leq \min(4,p)$. Conversely, let $(Y,G)$ be any Abelian group with at least three elements and with exponent $m$ such that $m$ divides $n-1$. If $m$ has an odd prime divisor $q$, then we can conclude that $|Y|\geq q\geq p\geq \min(4,p)$ just as in $(a)$. If $m$ has no odd prime divisors, then $m$ is a power of $2$, and then $|Y|$ is even, which together with $|Y|\geq 3$ implies that $|Y|\geq 4\geq \min(4,p)$. Thus, we conclude that $c_{n-1}\geq \min(4,p)$.\qedhere
\end{enumerate}
\end{proof}

\section{An alternative hierarchy}\label{Sec:alt}

For any integer $k\geq 1$, let $S^n_k$ be the set of $n$-tuples $(x_1,\ldots,x_n)\in X^n$ such that $|\{x_1,\ldots,x_n\}| \leq k$. Of course, we have $D^n_k \subseteq S^n_{n-k+1}$ for $k\in \{1,\ldots,n\}$. Also, we have $S^n_k \subseteq S^n_{k+1}$ for $k\in \{1,\ldots,n-1\}$. Now, denote by $\mathcal{G}^n_k$ the class of those associative $n$-ary operations $F\colon X^n\to X$ satisfying
$$
F(x_1,\ldots,x_n)\in\{x_1,\ldots,x_n\},\qquad (x_1,\ldots,x_n)\in S^n_k.
$$
We say that these operations are \emph{quasitrivial on $S^n_k$}.

It is not difficult to see that if $F\in \mathcal{G}^n_k$, then $F\in \mathcal{F}_{n-k+1}^n$. Actually, we have $\mathcal{G}^n_1=\mathcal{F}_{n}^n$ and $\mathcal{G}^n_n=\mathcal{F}_{1}^n$. These are the only classes when $n=2$, and thus we assume throughout this section that $n\geq 3$. Due to Proposition \ref{prop:QTDn2}, we have that $\mathcal{G}^n_n = \cdots = \mathcal{G}^n_3$ is exactly the class of quasitrivial associative $n$-ary operations, and hence we only need to consider operations in $\mathcal{G}^n_2$. The counterpart of Theorem~\ref{cor:cons} can then be stated as follows.

\begin{theorem}\label{thm:main3}
If $n$ is odd and $G\in \mathcal{H}_{2}$, then its $n$-ary extension $F=G^{n-1}$ is in $\mathcal{G}^n_{2}\setminus\mathcal{G}^n_n$. Conversely, for every $F\in\mathcal{G}^n_{2}\setminus\mathcal{G}^n_n$ we have $|E_F|\geq 3$, $n$ is odd, the reductions of $F$ are exactly the operations $G_e$ for $e\in E_F$, and they lie in $\mathcal{H}_{2}$.
\end{theorem}
\begin{proof}
If $n$ is odd and $G\in \mathcal{H}_{2}$, then $n-1$ is even, and so $G\in \mathcal{H}_{n-1}$. Therefore by Theorem~\ref{cor:cons}, $F=G^{n-1}$ is in $\mathcal{F}^n_{n-1}\setminus \mathcal{F}^n_{1}=\mathcal{F}^n_{n-1}\setminus \mathcal{G}^n_{n}$. We have shown in the proof of Theorem \ref{cor:cons} that $E_F=Y$. In order to show $F\in\mathcal{G}^n_{2}$, we need to show that if $x_1,\ldots,x_n\in\{x,y\}$, then $F(x_1,\ldots,x_n)\in\{x,y\}$. If $x$ or $y$ is in $X\setminus Y$, this follows from Proposition \ref{prop:anngeneral}. If $\{x,y\}\subseteq Y$, then if $k$ arguments are equal to $x$ and $n-k$ are equal to $y$, $F(x_1,\ldots,x_n)=F(k\Cdot x,(n-k)\Cdot y)$ because $(Y,G|_{Y^2})$ is an Abelian group. Since $n$ is odd, the parity of $k$ and of $n-k$ are different. Since $(Y,G|_{Y^2})$ has exponent 2, this expression is equal to $x$ (resp. $y$) when $k$ is odd (resp. even).

Conversely, if $F\in\mathcal{G}^n_{2}\setminus\mathcal{G}^n_n\subseteq \mathcal{F}^n_{n-1}\setminus\mathcal{F}^n_1$, then by Theorem~\ref{cor:cons}, we have $|E_F|\geq 3$, all the reductions of $F$ are exactly the operations $G_e$ for $e\in E_F$ and they lie in $\mathcal{H}_{n-1}$. In particular, for any $e\in E_F$, we have that $(E_F,G_e)$ is an Abelian group whose exponent divides $n-1$. However, since the neutral element is the only idempotent element of a group and since $G_e(e',e')\in \{e,e'\}$ for any $e,e'\in E_F$, it follows that $G_e(e',e') = e$ for any $e,e'\in E_F$, i.e., for any $e\in E_F$ we have that $(E_F,G_e)$ is a group of exponent $2$. Therefore, we conclude that $(E_F,F|_{E_F^n})$ is the $n$-ary extension of an Abelian group of exponent $2$. Also, since $2$ divides $n-1$ we conclude that $n$ is odd.
\end{proof}
Theorem \ref{thm:main3} is particularly interesting as it enables us to construct easily $n$-ary operations in $\mathcal{G}^n_{2}\setminus\mathcal{G}^n_n$. For instance, consider the set $X = \{1,2,3,4,5,6\}$ together with the operation $G\colon X^2 \to X$ defined by the following conditions:
\begin{itemize}
\item $(\{1,2,3,4\},G|_{\{1,2,3,4\}^2})$ is isomorphic to $(\Z_2^2,+)$,
\item $G|_{\{5,6\}^2} = \pi_1|_{\{5,6\}^2}$, where $\pi_1\colon X^2 \to X$ is defined by $\pi_1(x,y)=x$ for any $x,y\in X$,
\item for any $x\in \{1,2,3,4\}$, $G(x,5)=G(5,x)=5$ and $G(x,6)=G(6,x)=6$.
\end{itemize}
Then for any integer $p\geq 1$, we have that the operation associated with any $(2p+1)$-ary extension of $(\{1,2,3,4,5,6\},G)$ is in $\mathcal{G}^{2p+1}_{2}\setminus\mathcal{G}^{2p+1}_{2p+1}$ by Theorem \ref{thm:main3}.

We now state a reformulation of Theorem \ref{thm:main3} that does not make use of binary reductions. We omit the proof of this result as it is a straightforward adaptation of the proof of Theorem \ref{thm:main2}.
\begin{theorem}\label{cor:cons2}
If an operation $F$ is in $\mathcal{G}^n_{2}\setminus\mathcal{G}^n_n$, then $n$ is odd and setting $Y=E_F$ we have $|Y|\geq 3$ and the following assertions hold.
\begin{enumerate}
\item[(a)] $(Y,F|_{Y^n})$ is the $n$-ary extension of an Abelian group of exponent $2$.
\item[(b)] $F|_{(X\setminus Y)^n}$ is associative, quasitrivial, and has at most one neutral element.
\item[(c)] For all $x_1,\ldots,x_n\in X$ and $i\in\{1,\ldots,n-1\}$ such that $\{x_i,x_{i+1}\}\cap (X\setminus Y)=\{x\}$ we have
\[F(x_1,\ldots,x_n)=F(x_1,\ldots,x_{i-1},x,x,x_{i+2},\ldots,x_n).\]
\end{enumerate}
Conversely, if $n$ is odd and $F$ is an operation that satisfies these conditions for some $Y\subseteq X$ such that $|Y|\geq 3$, then $F\in\mathcal{G}^n_{2}\setminus\mathcal{G}^n_n$ and $E_F=Y$.
\end{theorem}

We end this section with the counterpart of Proposition \ref{cor:triv} and Corollary \ref{cor:triv2} for operations in $\mathcal{G}^n_{2}\setminus\mathcal{G}^n_n$.

\begin{corollary}
We have $\mathcal{G}^n_{2}\setminus\mathcal{G}^n_n \neq \varnothing$ if and only if $n$ is odd and $|X|\geq 4$.
\end{corollary}

\begin{proof}
(Necessity) By Theorem \ref{thm:main3}, we have that $n$ is odd and there exists a subset $Y\subseteq X$ and an Abelian group $(Y,G)$ of exponent $2$ such that $|Y|\geq 3$. Since $(Y,G)$ is of exponent $2$ we have $|X|\geq |Y|\geq 4$.

(Sufficiency) Let $Y\subseteq X$ such that $|Y| = 4$. We can endow $Y$ with an operation $G_Y$ such that $(Y,G_Y)$ is an Abelian group of exponent 2 that is isomorphic to $(\Z_2^2,+)$. Let us consider the operation $G\colon X^2 \to X$ defined by the following conditions:
\begin{itemize}
\item $G|_{Y^2} = G_Y$.
\item $G(x,y) = y$ for any $x,y\in X\setminus Y$.
\item  Any $x\in X\setminus Y$ is an annihilator for $G|_{(\{x\}\bigcup Y)^2}$.
\end{itemize}
It is not difficult to see that $G\in \mathcal{H}_2$ (see Definition \ref{def:H}). Thus, we have $G^{n-1} \in \mathcal{G}^n_{2}\setminus\mathcal{G}^n_n$ by Theorem \ref{thm:main3}, which concludes the proof.
\end{proof}

\section{Concluding remarks}\label{Sec:conclusion}

In this paper we characterized the class $\mathcal{F}^n_{n-1}\setminus\mathcal{F}^n_1$, i.e., the class of those associative operations $F\colon X^n \to X$ that are not quasitrivial but satisfy the condition $F(x_1,\ldots,x_n)\in \{x_1,\ldots,x_n\}$ whenever at least $n-1$ of the elements $x_1,\ldots,x_n$ are equal to each other (Theorems \ref{cor:cons} and \ref{thm:main2}). These characterizations enabled us to obtain necessary and sufficient conditions on the cardinality of $X$ so that $\mathcal{F}^n_{n-1}\setminus\mathcal{F}^n_1\neq \varnothing$. Moreover, we proved that any operation in $\mathcal{F}^n_{n-1}\setminus\mathcal{F}^n_1$ is reducible to a binary associative operation (Corollary \ref{cor:dud}). Finally, we characterized the class $\mathcal{G}^n_{2}\setminus\mathcal{G}^n_n$, i.e., the class of those associative operations $F\colon X^n \to X$ that are not quasitrivial but satisfy the condition $F(x_1,\ldots,x_n)\in \{x_1,\ldots,x_n\}$ whenever $|\{x_1,\ldots,x_n\}|\leq 2$ (Theorems \ref{thm:main3} and \ref{cor:cons2}). As a byproduct of these characterizations, we obtained necessary and sufficient conditions on the cardinality of $X$ for which $\mathcal{G}^n_{2}\setminus\mathcal{G}^n_n\neq \varnothing$.

The main results of this paper thus characterize several relevant subclasses of associative and idempotent $n$-ary operations. However, the characterization of the class $\mathcal{F}^n_n$ of associative and idempotent $n$-ary operations still eludes us. This and related enumeration results \cite{CouDev,CouDevMar} constitute a topic of current research.

\section*{Acknowledgments}

The authors would like to thank the anonymous reviewers for their insightful remarks that helped improving the current paper. They are especially grateful for Proposition \ref{cor:triv} and Corollary \ref{cor:triv2}. The second author is supported by the Luxembourg National Research Fund under the project PRIDE 15/10949314/GSM.

\appendix\section{Alternative proof of Corollary \ref{cor:dud}}

We provide an alternative proof of Corollary \ref{cor:dud} that does not use \cite[Corollary 2.3]{CouDev}.

To this extent, we first prove the following general result.

\begin{proposition}\label{prop:red1}
Let $F \in \mathcal{F}^n_n$. The following assertions are equivalent.
\begin{enumerate}
\item[(i)] $F$ is reducible to an associative and idempotent operation $G\colon X^2 \to X$.
\item[(ii)] $F((n-1)\Cdot x,y) = F(x,(n-1)\Cdot y)$ for any $x,y\in X$.
\end{enumerate}
\end{proposition}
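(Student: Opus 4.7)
For the direction (i) $\Rightarrow$ (ii), associativity and idempotency of $G$ force $G^{j-1}(j\Cdot x) = x$ for every $j \geq 1$ (by a trivial induction on $j$), so using associativity of $G$ to rebracket one obtains $F((n-1)\Cdot x, y) = G^{n-1}((n-1)\Cdot x, y) = G(G^{n-2}((n-1)\Cdot x), y) = G(x,y)$, and symmetrically $F(x, (n-1)\Cdot y) = G(x, G^{n-2}((n-1)\Cdot y)) = G(x,y)$; both sides of (ii) are thus equal to $G(x,y)$.

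For the harder direction (ii) $\Rightarrow$ (i), I would take as the candidate reduction the binary operation
\[
G(x,y) ~:=~ F((n-1)\Cdot x, y) ~=~ F(x, (n-1)\Cdot y),
\]
the two expressions agreeing by (ii). Idempotency is immediate: $G(x,x) = F(n\Cdot x) = x$. For associativity of $G$, I would use the two expressions of $G$ in tandem: $G(G(x,y),z) = F(F((n-1)\Cdot x, y), (n-1)\Cdot z)$, and the $n$-ary associativity of $F$ rebrackets this $(2n-1)$-argument expression as $F((n-1)\Cdot x, F(y, (n-1)\Cdot z)) = F((n-1)\Cdot x, G(y,z)) = G(x, G(y,z))$.

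The substantive step is the reducibility $F = G^{n-1}$, which I would establish by proving by induction on $k \in \{2,\ldots,n\}$ the identity
\[
G^{k-1}(x_1,\ldots,x_k) ~=~ F\bigl(x_1,\ldots,x_{k-1}, (n-k+1)\Cdot x_k\bigr),
\]
whose $k = n$ instance is the desired reducibility. The base $k=2$ is just the definition of $G$. In the inductive step, associativity of $G$ gives $G^{k-1}(x_1,\ldots,x_k) = G(G^{k-2}(x_1,\ldots,x_{k-1}), x_k)$, and the induction hypothesis produces the nested expression $F(F(x_1,\ldots,x_{k-2},(n-k+2)\Cdot x_{k-1}),(n-1)\Cdot x_k)$. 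Combining $F$-associativity (to shift the inner $F$ and expose the sub-expression $F(x_{k-1},(n-1)\Cdot x_k)$), the identity (ii) (to rewrite it as $F((n-1)\Cdot x_{k-1}, x_k)$), another shift, and idempotency of $F$ (to collapse $n$ consecutive copies of $x_{k-1}$) rewrites this as $F(x_1,\ldots,x_{k-2},(n-k+1)\Cdot x_{k-1}, x_k)$.

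The main obstacle is that for $3 \leq k \leq n-1$ this expression still differs from the target $F(x_1,\ldots,x_{k-1},(n-k+1)\Cdot x_k)$ by a nontrivial swap of the multiplicities of $x_{k-1}$ and $x_k$. I would bridge this gap with a sub-lemma --- a \emph{generalized exchange identity} --- asserting that for any $\mathbf{a} \in X^{n-j-m}$, any $b,c \in X$, and integers $j \geq 2$ and $m \geq 1$,
\[
F(\mathbf{a}, j\Cdot b, m\Cdot c) ~=~ F(\mathbf{a}, (j-1)\Cdot b, (m+1)\Cdot c).
\]
Its proof follows exactly the same three-move pattern: expand a trailing $c$ as $F(n\Cdot c)$ via idempotency, shift the inner $F$ to expose $F(b,(n-1)\Cdot c)$, apply (ii), and then shift again and collapse $n$ consecutive copies of $b$ using idempotency. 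Iterating this identity from $(j,m)=(n-k+1, 1)$ down to $(1, n-k+1)$ closes the induction and yields $F = G^{n-1}$.
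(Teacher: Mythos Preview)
Your proof is correct and uses the same core strategy as the paper: define $G(x,y)=F((n-1)\Cdot x,y)$, verify its associativity and idempotency, and then combine (ii) with idempotency of $F$ to establish $G^{n-1}=F$. The only difference is organizational. The paper expands $G^{n-1}(x_1,\ldots,x_n)$ in one stroke as $F^{n-1}((n-1)\Cdot x_1,\ldots,(n-1)\Cdot x_{n-1},x_n)$ and then repeatedly applies (ii) to transfer all the excess copies onto $x_1$ before collapsing with idempotency; this sidesteps any need for an auxiliary lemma. Your induction on $k$ with target $F(x_1,\ldots,x_{k-1},(n-k+1)\Cdot x_k)$ is valid, but this particular choice of intermediate formula creates the multiplicity mismatch you noticed, forcing the extra exchange identity. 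Both routes are sound; the paper's bookkeeping is simply a bit tidier.
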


\begin{proof}
The implication (i) $\Rightarrow$ (ii) is straightforward. Now, let us show that (ii) implies (i). So, suppose that
\begin{equation}\label{eq:sym}
F((n-1)\Cdot x,y) ~=~ F(x,(n-1)\Cdot y) \qquad x,y\in X,
\end{equation}
and consider the operation $G\colon X^2 \to X$ defined by $G(x,y) = F((n-1)\Cdot x,y)$ for any $x,y\in X$. It is not difficult to see that $G$ is associative and idempotent. Now, let $x_1,\ldots,x_n\in X$ and let us show that $G^{n-1}(x_1,\ldots,x_n)=F(x_1,\ldots,x_n)$. Using repeatedly \eqref{eq:sym} and the idempotency of $F$ we obtain
\begin{eqnarray*}
G^{n-1}(x_1,\ldots,x_n) &=& F^{n-1}((n-1)\Cdot x_1,(n-1)\Cdot x_2,\ldots,(n-1)\Cdot x_{n-1},x_n) \\
&=& F^{n-1}((2n-3)\Cdot x_1,x_2,(n-1)\Cdot x_3,\ldots,(n-1)\Cdot x_{n-1},x_n) \\
&=& \cdots \\
&=& F^{n-1}(((n-2)(n-1)+1)\Cdot x_1,x_2,x_3,\ldots,x_{n-1},x_n)\\
&=& F(x_1,\ldots,x_n),
\end{eqnarray*}
which shows that $F$ is reducible to $G$.
\end{proof}

\begin{remark}
Let $\leq$ be a total ordering on $X$. An operation $F\colon X^n \to X$ is said to be \emph{$\leq$-preserving} if $F(x_1,\ldots,x_n) \leq F(x_1',\ldots,x_n')$ whenever $x_i\leq x_i'$ for any $i\in \{1,\ldots,n\}$.
One of the main results of Kiss and Somlai \cite[Theorem 4.8]{KiSom18} is that every $\leq$-preserving operation $F\in \mathcal{F}^n_n$ is reducible to an associative, idempotent, and $\leq$-preserving binary operation. To this extent, they first show \cite[Lemma 4.1]{KiSom18} that any $\leq$-preserving operation $F\in \mathcal{F}^n_n$ satisfies
\begin{equation*}
F((n-1)\Cdot x,y) ~=~ F(x,(n-1)\Cdot y) \qquad x,y\in X.
\end{equation*}
Thus, we conclude that \cite[Theorem 4.8]{KiSom18} is an immediate consequence of \cite[Lemma 4.1]{KiSom18} and Proposition \ref{prop:red1} above.
\end{remark}

The following result is the key for the alternative proof of Corollary \ref{cor:dud}.

\begin{proposition}\label{prop:red2}
Let $F \in \mathcal{F}^n_{n-1}$. The following assertions are equivalent.
\begin{enumerate}
\item[(i)] $F$ is reducible to an associative and quasitrivial operation $G\colon X^2 \to X$.
\item[(ii)] $F$ is reducible to an associative and idempotent operation $G\colon X^2 \to X$.
\item[(iii)] $F((n-1)\Cdot x,y) = F(x,(n-1)\Cdot y)$ for any $x,y\in X$.
\item[(iv)] $|E_F|\leq 1$.
\end{enumerate}
\end{proposition}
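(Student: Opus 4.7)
The plan is to establish the cycle of implications $(i)\Rightarrow (ii)\Rightarrow (iii)\Rightarrow (iv)\Rightarrow (i)$, using the machinery already assembled in the paper.

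The implication $(i)\Rightarrow (ii)$ is immediate since every quasitrivial binary operation is idempotent. For $(ii)\Rightarrow (iii)$ I would simply invoke Proposition~\ref{prop:red1}: since $\mathcal{F}^n_{n-1}\subseteq \mathcal{F}^n_n$, every $F\in\mathcal{F}^n_{n-1}$ is idempotent, so Proposition~\ref{prop:red1} applies and yields condition (iii) directly from (ii).

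The implication $(iii)\Rightarrow (iv)$ I would prove by contradiction. Suppose $F$ satisfies (iii) but has two distinct neutral elements $e_1,e_2\in E_F$. Specializing (iii) with $x=e_1$ and $y=e_2$ gives
\[
F((n-1)\Cdot e_1,e_2)~=~F(e_1,(n-1)\Cdot e_2).
\]
Since $e_1\in E_F$, the left-hand side equals $e_2$, and since $e_2\in E_F$, the right-hand side equals $e_1$, forcing $e_1=e_2$, a contradiction. Hence $|E_F|\leq 1$.

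For the closing implication $(iv)\Rightarrow (i)$, assume $|E_F|\leq 1$. Then in particular $|E_F|\leq 2$, and Corollary~\ref{cor:quasi} (whose proof precedes this result in the paper) yields that $F$ is quasitrivial. Thus $F$ is associative, quasitrivial, and has at most one neutral element, so Proposition~\ref{prop:imp} guarantees that $F$ is reducible to an associative and quasitrivial binary operation $G\colon X^2\to X$. This closes the cycle.

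No step presents a genuine obstacle once the earlier results are in hand; the main subtlety is really just the careful bookkeeping in $(iii)\Rightarrow (iv)$, where one must remember that the two neutral elements produce \emph{different} values on the two sides of the identity in (iii), and the correct chaining of Corollary~\ref{cor:quasi} with Proposition~\ref{prop:imp} in $(iv)\Rightarrow (i)$.
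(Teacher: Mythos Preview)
Your cycle $(i)\Rightarrow(ii)\Rightarrow(iii)\Rightarrow(iv)\Rightarrow(i)$ is logically sound and each step is correctly justified. The route, however, diverges from the paper's at the last implication. The paper closes the loop by proving $(iv)\Rightarrow(iii)$ directly: assuming $|E_F|\leq 1$ and $F((n-1)\Cdot x,y)\neq F(x,(n-1)\Cdot y)$ for some $x\neq y$, membership in $\mathcal{F}^n_{n-1}$ forces both values into $\{x,y\}$; the case $F((n-1)\Cdot x,y)=y$, $F(x,(n-1)\Cdot y)=x$ yields $x,y\in E_F$ by Lemma~\ref{lem:charne}, contradicting $|E_F|\leq 1$, while the case $F((n-1)\Cdot x,y)=x$, $F(x,(n-1)\Cdot y)=y$ collapses to $x=y$ by a short associativity/idempotency computation. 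Your $(iv)\Rightarrow(i)$ is quicker on the page but imports the substantial Proposition~\ref{prop:neutregeneral} (through Corollary~\ref{cor:quasi}) and, more delicately, Proposition~\ref{prop:imp}. The latter is exactly \cite[Corollary~19]{CouDev}, and the declared purpose of this appendix is to reprove Corollary~\ref{cor:dud} \emph{without} appealing to \cite[Corollary~4]{CouDev}; pulling in a later corollary from the same paper risks reintroducing that dependency and undercuts the point of the alternative proof. The paper's direct $(iv)\Rightarrow(iii)$ argument, by contrast, uses only the elementary Lemma~\ref{lem:charne} and stays self-contained.
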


\begin{proof}
The equivalence (i) $\Leftrightarrow$ (ii) and the implication (iii) $\Rightarrow$ (iv) are straightforward. Also, the equivalence (ii) $\Leftrightarrow$ (iii) follows from Proposition \ref{prop:red1}. Now, let us show that (iv) implies (iii). So, suppose that $|E_F|\leq 1$ and suppose to the contrary that there exist $x,y\in X$ with $x\neq y$ such that $F((n-1)\Cdot x,y) \neq F(x,(n-1)\Cdot y)$. We have two cases to consider. If $F((n-1)\Cdot x,y)=y$ and $F(x,(n-1)\Cdot y)=x$, then by Lemma 2.5 we have that $x,y\in E_F$, which contradicts our assumption on $E_F$. Otherwise, if $F((n-1)\Cdot x,y)=x$ and $F(x,(n-1)\Cdot y)=y$, then we have
\begin{multline*}
x ~=~ F((n-1)\Cdot x,y) ~=~ F((n-1)\Cdot x,F(n\Cdot y))\\
  ~=~ F(F((n-1)\Cdot x,y),(n-1)\Cdot y) ~=~ F(x,(n-1)\Cdot y) ~=~y,
\end{multline*}
which contradicts the fact that $x\neq y$.
\end{proof}

\begin{proof}[Proof of Corollary \ref{cor:dud}]
This follows from Proposition \ref{prop:red2} and \cite[Lemma 1]{DudMuk06}.
\end{proof}

\end{document}